\author{Le Mai Nguyen}
\date{}
\theoremstyle{plain}
\newtheorem{thm}{Theorem}[section]
\newtheorem{prop}[thm]{Proposition}
\newtheorem{cons}[thm]{Construction}
\newtheorem{lem}[thm]{Lemma}
\theoremstyle{remark}
\newtheorem{rk}{\bf Remark}
\begin{document}
\thispagestyle{empty}

\centerline {A STRICTLY STATIONARY, $M$-TUPLEWISE INDEPENDENT,}
\centerline {MIXING COUNTEREXAMPLE TO THE CENTRAL LIMIT THEOREM}  
\vskip 1 true in

\noindent Le Mai Nguyen\hfil\break

\noindent Department of Mathematics \hfil\break
Indiana University \hfil\break
Bloomington \hfil\break
Indiana 47405 \hfil\break
\vskip 2 true in

\noindent \copyright\ 2011 Le Mai Nguyen \hfil\break
\noindent All rights reserved. \hfil\break
\newpage
\section{Introduction}
\numberwithin{equation}{section}
\qquad Suppose $X:=(X_k,k\in\mathbf{Z})$ is a sequence of random variables on a probability space $(\Omega, {\cal F}, P)$.  For a given integer $M \geq 2$ the sequence is said to satisfy \textit{$M$-tuplewise independence} if for every choice of $M$ distinct integers, $k(1), k(2),..., k(M)$, the random variables $X_{k(1)}, X_{k(2)},...,X_{k(M)}$ are independent.  The sequence $X$ is said to be \textit{strictly stationary} if for all nonnegative integers $n$ and for all integers $i$ and $j$, the random vectors $(X_i, X_{i+1},..., X_{i+n})$ and $(X_j, X_{j+1},..., X_{j+n})$ have the same distribution.  If the sequence $X$ is strictly stationary, then it is said to be \textit{mixing (in the ergodic-theoretic sense)} if

\begin{equation}
\lim_{n \to \infty} P(X \in A \cap T^nB) = P(X \in A)\cdot P(X \in B) \qquad \forall A, B \in {\cal R}^{\mathbf{Z}}
\end{equation}
where $T$ is the shift operator on ${\mathbf{R}}^\mathbf{Z}.$

A well-known consequence of a strictly stationary sequence being mixing (in the ergodic-theoretic sense) is that it will also be ergodic.

In \cite{BP2009}, for an arbitrary fixed even integer $L$ satisfying $L\geq6$, the authors, Bradley and Pruss, constructed a strictly stationary, $(L-1)$-tuplewise independent, \emph{ergodic} sequence of real-valued random variables that fails to satisfy the Central Limit Theorem.  Working with the same construction as Bradley and Pruss, we shall show below that one can extend their result to construct a strictly stationary, $(L-1)$-tuplewise independent sequence that is mixing (in the ergodic-theoretic sense) and yet still fails to satisfy the Central Limit Theorem.

\subsection{Notations}\label{notations}

Before the main result is stated, a few notations are needed:

Convergence in distribution will be denoted by $\Rightarrow$.

For a given sequence $X:=(X_k,k\in\mathbf{Z})$ of random variables, for $n\in\mathbf{N}$, the partial sums will be denoted by

\begin{equation}
S_n := S(X,n) := X_1 + X_2 + \cdots + X_n
\end{equation}

If $a\leq b$ are integers and $U_a,U_{a+1},U_{a+2},\ldots,U_b$ are random variables, then the random vector $(U_a,U_{a+1},U_{a+2},\ldots,U_b)$ will be denoted as $U[a,b]$.  Similarly, if $u_a,u_{a+1},u_{a+2},\ldots,u_b$ are real numbers, then the real vector $(u_a,u_{a+1},u_{a+2},\ldots,u_b)$ will be denoted as $u[a,b]$.  Also, if $a$ is an integer and $U_a,U_{a+1},U_{a+2},\ldots$ are random variables (resp. $u_a,u_{a+1},u_{a+2},\ldots$ are real numbers), then the random sequence $(U_a,U_{a+1},U_{a+2},\ldots)$ (resp. $(u_a,u_{a+1},u_{a+2},\ldots)$) will be denoted as $U[a,\infty)$ (resp. $u[a,\infty)$).

For any random sequence $\eta:=(\eta_k, k\in\mathbf{Z})$ and any nonempty finite set $G\subset\mathbf{Z}$, the notation $\eta_G$ will refer to the random vector $(\eta_{g(1)},\eta_{g(2)},\ldots,\eta_{g(\ell)})$ where $\ell:=\text{card }G$ and $\eta_{g(1)},\eta_{g(2)},\ldots,\eta_{g(\ell)}$ are, in strictly increasing order, the elements of $G$.

\subsection{Main Result}

Here is the main result.

\begin{thm}\label{mainresult}
Suppose $M$ is an integer such that $M\geq2$. Then, there exists a strictly stationary sequence $\tilde{X}:= (\tilde{X}_k,k\in\mathbb{Z})$ with $E\tilde{X}_0=0$, and $E\tilde{X}_0^2=1/2$ satisfying the following conditions:
\begin{itemize}
\item[\textup{(A)}] $\tilde{X}$ satisfies M-tuplewise independence.
\item[\textup{(B)}] $\tilde{X}$ is mixing (in the ergodic-theoretic sense).
\item[\textup{(C)}] The random variables $|\tilde{X}_k|, k\in\mathbf{Z}$ are independent (and identically distributed).
\item[\textup{(D)}] For every infinite set, $Q\subset\mathbf{N}$, there exists an infinite set $T\subset Q$ and a nondegenerate, non-normal probability measure, $\tilde{\mu}$ on $(\mathbf{R}, {\cal R})$ such that $S(\tilde{X},n)/\sqrt{n}\Rightarrow\tilde{\mu}$, as $n\rightarrow\infty, n\in T$.
\end{itemize}
\end{thm}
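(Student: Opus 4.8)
The plan is to reduce almost everything to the construction of Bradley and Pruss and to concentrate the new work on condition (B). First I would fix an even integer $L$ with $L \ge \max(6, M+1)$ and take $\tilde X$ to be the sequence produced by the construction of \cite{BP2009} for this $L$. By their results that sequence is strictly stationary with $E\tilde X_0 = 0$ and $E\tilde X_0^2 = 1/2$, it is $(L-1)$-tuplewise independent, the variables $|\tilde X_k|$ are i.i.d., and it satisfies the subsequential non-normal limit law in (D) as well as being ergodic. Since any subfamily of an independent family is independent and $M \le L-1$, $(L-1)$-tuplewise independence immediately yields (A); conditions (C) and (D) and the mean/variance normalization are quoted directly. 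Thus the entire burden is to promote the ergodicity already established in \cite{BP2009} to the strictly stronger property (B).

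For (B), I would first reduce to a generating $\pi$-system. Let $\mathcal{A}$ be the algebra of finite-dimensional cylinder sets in $\mathcal{R}^{\mathbf{Z}}$, i.e.\ events determined by the coordinates in some finite window $[-r,r]$. This algebra is closed under finite intersection and generates $\mathcal{R}^{\mathbf{Z}}$, so by a standard monotone-class approximation argument it suffices to verify (1.1) for $A,B \in \mathcal{A}$; equivalently, for bounded measurable $f,g$ each depending on only finitely many coordinates, one must show $E\bigl[f(\tilde X)\, g(T^n\tilde X)\bigr] \to Ef(\tilde X)\cdot Eg(\tilde X)$ as $n\to\infty$. The passage from the generators to a general pair $A,B\in\mathcal{R}^{\mathbf{Z}}$ is routine once this estimate is in hand, since a general event is approximated in $L^1(P)$ by cylinder events and the approximation error is absorbed uniformly in $n$ by shift-invariance of $P$.

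The heart of the matter is the generator estimate, and this is where I expect the main obstacle to lie. I would exploit the explicit layered structure of the Bradley--Pruss construction, in which each $\tilde X_k$ is a deterministic function of an underlying family of independent ingredients --- the i.i.d.\ amplitudes together with the auxiliary sign/selector randomness organized across geometrically growing scales. An event in the window $[-r,r]$ depends only on the ingredients attached to a finite set of indices and scales, and its shift by $n$ depends on the correspondingly translated set. The aim is to show that for large $n$ the translated set draws, with overwhelming probability, on ingredients independent of those used by the fixed window, so that the joint expectation factors in the limit. The difficulty is precisely that the construction carries genuine \emph{long-range} dependence --- this is exactly what lets the sequence violate the CLT in (D) --- so the two windows never decouple exactly at any finite distance; one must instead show that the probability of the ``resonant'' configurations coupling the two windows tends to $0$ and bound the resulting discrepancy uniformly over all cylinder events.

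Concretely, I would isolate a ``bad'' event $\mathcal{B}_n$, the set of sample points on which the fixed window and its $n$-shift share a common coarse-scale ingredient, establish $P(\mathcal{B}_n)\to 0$ from the geometric growth of the scale sizes (each fixed window meets only finitely many scales, and the shift eventually escapes into fresh ones), and on the complement use the mutual independence of the distinct ingredients to factor $E\bigl[f(\tilde X)\,g(T^n\tilde X)\bigr]$ up to an error controlled by $\|f\|_\infty\|g\|_\infty P(\mathcal{B}_n)$. Combined with the reduction above, this yields (1.1) on $\mathcal{A}$, hence on all of $\mathcal{R}^{\mathbf{Z}}$, giving (B). The point demanding the most care is that ergodicity alone does not supply this decay, so the estimate must be extracted from the fine combinatorial design of the selectors rather than from any soft invariance argument.
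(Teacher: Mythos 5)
There is a genuine gap, and it is located exactly where you predicted the difficulty would be, but it is worse than a missing estimate: the object you propose to prove mixing for is not the sequence the theorem is about, and it is not mixing. The paper does \emph{not} take $\tilde X$ to be the Bradley--Pruss sequence. It first modifies each approximating sequence $X^{(n)}$ by the random-spacing operation of Construction \ref{randspace}: an independent i.i.d.\ Bernoulli$(1/2)$ selector sequence $V^{(n)}$ is used to form $\tilde X^{(n)}:={\cal S}(X^{(n)},V^{(n)})$, which keeps the values of $X^{(n)}$ only at the random positions where $V^{(n)}_k=1$ and inserts zeros elsewhere; only then is a weak subsequential limit taken. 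One immediate symptom that your $\tilde X$ is the wrong object is the normalization: $X^{(n)}_0$ is uniform on $[-\sqrt3,\sqrt3]$, so the Bradley--Pruss sequence has variance $1$, whereas the theorem asserts $E\tilde X_0^2=1/2$ --- the factor $1/2$ comes precisely from Lemma \ref{26.3}(c) with $p=1/2$.

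The reason the modification is unavoidable is the residual block periodicity of the unmodified construction. For the two cylinder windows $[-L^n+1,0]$ and $[N,N+L^n-1]$, the second window splits across the independent $L^n$-blocks of $Y^{(n)}$ in a way governed by $N\bmod L^n$, so $P(X^{(n)}\in A\cap T^NB)$ is periodic in $N$ with period $L^n$ and does not converge to $P(A)P(B)$; the independence of distant blocks gives you a \emph{product} of block probabilities, but one indexed by the residue $N\bmod L^n$ rather than the required average over all residues. Your ``bad event'' scheme addresses only the failure of independence between the two windows, which is not the obstruction: on the complement of $\mathcal{B}_n$ your factorization would still produce the residue-dependent product, and the limit over $N$ would not exist. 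The paper's way out is that, after random spacing, the number of retained coordinates strictly between the two windows is a Binomial$(N-1,1/2)$ variable $\beta_{N-1}$, and the proof of (B) rests on the equidistribution $P(\beta_{N-1}\equiv k\ \mathrm{mod}\ L^n)\to 1/L^n$ (equations \eqref{mod1} and \eqref{claim2}), which averages over the block offsets and yields \eqref{task2}. A correct proof therefore requires changing the construction, redoing the moment computations (Lemmas \ref{constr} and \ref{lemmaone}--\ref{lemmathree} rework property (D) for the spaced sequence via the binomial mixture formula of Lemma \ref{26.3}(d)), and then running the two-task argument of Sections \ref{Task 1} and \ref{Task2}; none of (B), (D), or the stated moments can simply be quoted from \cite{BP2009}.
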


Before delving into the proof, much background information (from \cite{BP2009} and \cite{RB}) will need to be introduced.

\section{Background Information}

Throughout the rest of this paper, once again (just as in \cite{BP2009}) let $L$ be an arbitrary fixed integer such that $L$ is {\bf even} and $L\geq6$. Then in order to prove Theorem \ref{mainresult}, it suffices to construct a strictly stationary, $(L-1)$-tuplewise independent random sequence that satisfies the properties described in the theorem.

This section will be divided into two main parts: relevant background information taken from \cite{RB} and background information from \cite{BP2009}

\subsection{}

We begin the background information with a few well known facts about strictly stationary sequences that are mixing (in the ergodic-theoretic sense).

\begin{prop}\label{mixing} Suppose $X:=(X_k, k \in \mathbf{Z})$ is a strictly stationary random sequence.  Then $X$ is mixing (in the ergodic-theoretic sense) if and only if the following holds:

For every choice of integers, $I_1 \leq I_2$ and $J_1 \leq J_2$, and every choice of sets $A_i \in {\cal R}$, $i = I_1, I_1+1, I_1+2..., I_2$ and $B_j \in {\cal R}$, $j = J_1, J_1+1, J_1+2..., J_2$ one has that
\begin{multline}
\lim_{n \to \infty} P(\{X_i\in A_i, I_1\leq i\leq I_2\}\cap\{X_{n+j}\in B_j, J_1\leq j\leq J_2\}) = \\
= P(X_i\in A_i, I_1\leq i\leq I_2)\cdot P(X_j\in B_j, J_1\leq j\leq J_2).
\end{multline}
\end{prop}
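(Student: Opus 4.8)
The plan is to prove Proposition~\ref{mixing} by showing the two directions of the equivalence. The ``only if'' direction is essentially immediate from the definition of mixing in equation~(1.1): the events appearing in the displayed limit are cylinder events in $\mathcal{R}^{\mathbf{Z}}$, and shifting the block $\{X_{n+j}\in B_j\}$ by $n$ corresponds exactly to applying $T^n$ to the cylinder set determined by the $B_j$. So if I set $A := \{x\in\mathbf{R}^{\mathbf{Z}}: x_i\in A_i,\ I_1\le i\le I_2\}$ and $B := \{x: x_j\in B_j,\ J_1\le j\le J_2\}$, then the event $\{X_i\in A_i,\ I_1\le i\le I_2\}\cap\{X_{n+j}\in B_j,\ J_1\le j\le J_2\}$ is precisely $\{X\in A\cap T^n B\}$ (up to the bookkeeping of how the shift acts on coordinates, which I would state carefully), and the defining property~(1.1) gives the claimed limit.

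For the ``if'' direction, the main work is a standard measure-theoretic approximation argument, and this is where I expect the real effort to lie. The hypothesis gives the mixing limit only for finite-dimensional rectangle (cylinder) events, whereas~(1.1) demands it for \emph{all} $A,B\in\mathcal{R}^{\mathbf{Z}}$. The plan is to fix $A$ and $B$, regard $\mu := P(X\in\,\cdot\,)$ as the law of $X$ on $(\mathbf{R}^{\mathbf{Z}},\mathcal{R}^{\mathbf{Z}})$, and first establish the limit when $A$ and $B$ range over the algebra $\mathcal{A}$ of finite unions of measurable rectangles depending on finitely many coordinates. On $\mathcal{A}$ the result follows from the hypothesis by finite additivity and by noting that for $n$ large the coordinate-blocks defining $T^n B$ are disjoint from those defining $A$; strict stationarity then factors the joint probability into the product, exactly as in the displayed equation of the proposition.

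To pass from the generating algebra $\mathcal{A}$ to the full $\sigma$-algebra $\mathcal{R}^{\mathbf{Z}} = \sigma(\mathcal{A})$, I would invoke a standard approximation lemma: for any $A\in\mathcal{R}^{\mathbf{Z}}$ and any $\varepsilon>0$ there is an $A'\in\mathcal{A}$ with $\mu(A\,\triangle\,A')<\varepsilon$, and similarly for $B$. The key estimate is that the mixing quantity is stable under such approximations: since $T$ is measure-preserving (by strict stationarity $\mu\circ T^{-1}=\mu$), one has $\mu(T^n B\,\triangle\,T^n B')=\mu(B\,\triangle\,B')<\varepsilon$ uniformly in $n$, and $|\mu(A\cap T^nB)-\mu(A'\cap T^nB')|$ is controlled by $\mu(A\triangle A')+\mu(T^nB\triangle T^nB')<2\varepsilon$, while the product side $|\mu(A)\mu(B)-\mu(A')\mu(B')|$ is likewise at most $2\varepsilon$. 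Combining these with the already-established convergence on $\mathcal{A}$ and letting $\varepsilon\to0$ through an $\varepsilon/3$-type argument yields~(1.1) for arbitrary $A,B$. The main obstacle is purely technical bookkeeping: making precise how the shift $T^n$ acts on the finite-dimensional cylinder events so that the hypothesis applies cleanly, and confirming that the uniform-in-$n$ stability estimate goes through; no deep idea is needed beyond the standard fact that a finitely additive set function agreeing in the limit on a generating algebra, together with a measure-preserving shift, extends the limit to the whole $\sigma$-algebra.
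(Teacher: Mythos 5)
The paper gives no proof of this proposition at all---it simply cites \cite{RB}---and your argument is the standard one and is correct: the ``only if'' direction is indeed just the identification of the interval events with cylinder sets and the action of $T^n$, and the ``if'' direction is the usual extension from rectangles to the algebra of finite (disjoint) unions of rectangles and then, via the approximation $\mu(A\,\triangle\,A')<\varepsilon$ and the uniform-in-$n$ bound $\mu(T^nB\,\triangle\,T^nB')=\mu(B\,\triangle\,B')$ (using that the two-sided shift is invertible and measure-preserving), to all of $\mathcal{R}^{\mathbf{Z}}$. One phrase is loose---on the algebra the factorization in the limit comes directly from the hypothesis applied to rectangles (padding finite coordinate sets out to intervals with copies of $\mathbf{R}$), not from stationarity ``factoring'' the joint probability---but this does not affect the validity of the argument.
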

A proof of the above proposition can be found in \cite{RB}.
\begin{cons}\label{randspace}\textup{Suppose $0<p<1$ and $W := (W_k,k\in\mathbf{Z})$ is a sequence of random variables.  Suppose $V := (V_k,k\in\mathbf{Z)}$ is a sequence of independent, identically distributed (i.i.d.) random variables with $P(V_0=1)=p$ and $P(V_0=0)=1-p$ that is independent of the sequence $W$.  Let $\{{\kappa}_j,j\in\mathbf{Z}\}$ be the (random) set of all integers $k$ such that $V_k = 1$, with}

\begin{equation}
...<{\kappa}_{-2}<{\kappa}_{-1}<{\kappa}_0\leq 0 < 1 \leq {\kappa}_1<{\kappa}_2<...
\end{equation}
\textup{Define the sequence $\tilde{W} := (\tilde{W}_k, k\in\mathbf{Z})$ as follows: For each $k\in\mathbf{Z}$,}
\begin{equation}
\tilde{W}_k = \left\{
\begin{array}{rl}
W_j & \text{if } k = \kappa_j \text{ for a given $j \in \mathbf{Z}$}\\
0 & \text{if } V_k = 0
\end{array} \right.
\end{equation}
\end{cons}

This sequence $\tilde{W}$ will be referred to as the ``sequence ${\cal S}(W,V)$".

\begin{lem}\label{26.3}In the context of the above Construction (with all assumptions there satisfied), if also the sequence W is strictly stationary and $k$-tuplewise independent (where $k$ is a positive integer), then the following statements hold:
\begin{itemize}
\item[\textup{(a)}] $\tilde{W}$ is strictly stationary.
\item[\textup{(b)}] $\tilde{W}$ is $k$-tuplewise independent.
\item[\textup{(c)}] For each $x \in \mathbf{R}, P(\tilde{W}_0 \leq x) = (1-p)\cdot I_{[0,\infty)}(x)+p\cdot P(W_0\leq x). $  If $E|W_0|<\infty$, then $E|\tilde{W}_0|<\infty$ and $E\tilde{W}_0=p \cdot EW_0$.  If $EW^2_0<\infty$, then $E\tilde{W}^2_0 = p \cdot EW^2_0$.
\item[\textup{(d)}] If $r$ is a positive integer and $E|W_0|^r<\infty$ then for each $n\geq 1$,
\[
E[S(\tilde{W},n)]^r = \sum_{j=1}^n\binom{n}{j}p^j(1-p)^{n-j}E[S(W,j)]^r.
\]
\end{itemize}
\end{lem}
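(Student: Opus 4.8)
The plan is to prove the four parts (a)--(d) essentially in order, since each builds on the construction's structure, with part (a) and the formula in (d) being the technical heart. Throughout, the key observation is that the ``thinned'' sequence $\tilde{W}$ records the values $W_j$ at the random positions $\kappa_j$ determined by the Bernoulli sequence $V$, and inserts zeros elsewhere. The fundamental input I would exploit repeatedly is that $V$ is i.i.d.\ and independent of $W$, so the position process $(\kappa_j)$ is a stationary renewal-type process whose increments are independent of the recorded values.

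For part (a), strict stationarity of $\tilde{W}$, I would show that for any finite block the distribution of $(\tilde{W}_{m}, \ldots, \tilde{W}_{m+n})$ does not depend on $m$. The clean way is to condition on the pattern of $V$ over the relevant index range: given which positions in $[m, m+n]$ carry a $1$ (say $j$ of them, in some configuration), the corresponding coordinates of $\tilde{W}$ are the values $W$ at the $j$ successive ``live'' indices $\kappa_\ell, \kappa_{\ell+1}, \ldots, \kappa_{\ell+j-1}$ for the appropriate starting index, and the remaining coordinates are $0$. By the strict stationarity of $W$, the joint law of any $j$ consecutive $W$-values is shift-invariant, and by the i.i.d.\ property of $V$ the probability of any given $0/1$ pattern depends only on the pattern itself and not on its absolute location. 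Combining these via the independence of $V$ and $W$ gives a shift-invariant expression, establishing (a).

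For part (b), $k$-tuplewise independence, I would fix $k$ distinct integers $i_1, \ldots, i_k$ and again condition on $V$. On the event that a given subset of these indices is ``live'' and the rest ``dead,'' the dead coordinates equal $0$ deterministically while the live coordinates are $W$-values at \emph{distinct} random positions $\kappa$; since $W$ is $k$-tuplewise independent, any $k$ (hence any at most $k$) distinct $W$-values are mutually independent, so the live coordinates are conditionally independent. Using part (c)'s marginal formula to identify the unconditional one-dimensional laws, one checks that the joint law factors as the product of marginals, which is exactly $k$-tuplewise independence; the $V$-averaging is harmless because the Bernoulli choices at distinct indices are themselves independent. Part (c) is then a direct computation: the marginal CDF formula follows by conditioning $\tilde{W}_0$ on whether $V_0 = 1$ (probability $p$, giving $W_0$) or $V_0 = 0$ (probability $1-p$, giving the point mass at $0$), and the moment identities $E\tilde{W}_0 = p\,EW_0$ and $E\tilde{W}_0^2 = p\,EW_0^2$ follow by integrating, using that the point mass at $0$ contributes nothing.

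The main obstacle, and the step I would spend the most care on, is the moment formula in part (d). The idea is that $S(\tilde{W}, n) = \tilde{W}_1 + \cdots + \tilde{W}_n$ equals the sum of $W$-values over exactly those indices in $[1,n]$ that are live. If I condition on the event that precisely $j$ of the indices $1, \ldots, n$ are live, this has probability $\binom{n}{j} p^j (1-p)^{n-j}$, and on that event the sum of the live $\tilde{W}$-coordinates is a sum of $W$-values at $j$ distinct consecutive-in-rank live positions. The crucial point is that, conditionally, these $j$ values have the same joint distribution as $W_1, \ldots, W_j$: by strict stationarity of $W$ their law depends only on the count $j$ and not on which specific $j$ indices were selected, and by independence of $W$ and $V$ the conditioning on the $V$-pattern does not alter the $W$-law. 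Hence $E\big[(S(\tilde{W}, n))^r \mid j \text{ live}\big] = E[(S(W, j))^r]$, and summing against the binomial weights yields the stated identity. The delicate points to verify rigorously are that the selection of \emph{which} indices are live is independent of the $W$-values (so no size-biasing distorts the conditional moment), and that strict stationarity genuinely reduces the law of the selected block to that of $W_1, \ldots, W_j$ regardless of gaps; both follow from the construction's independence assumption, but they must be stated explicitly to make the conditioning argument airtight.
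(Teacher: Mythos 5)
Your argument is correct: conditioning on the Bernoulli pattern of $V$ (which is independent of $W$) and using strict stationarity of $W$ to reduce any block of $j$ consecutive-in-rank live values to the law of $(W_1,\dots,W_j)$ is exactly the standard route, and you correctly isolate the one delicate point, namely that the random rank offset (your index $\ell$) is $V$-measurable and therefore does not bias the conditional law of the selected $W$-block. Note that the paper itself gives no proof here --- it declares (b) trivial and cites Theorem 26.4 of the referenced Bradley volumes for (a), (c) and (d) --- so there is no in-paper argument to diverge from; your conditioning argument is the natural one underlying that reference.
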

Statement (b) holds by a trivial argument, while proofs that statements (a), (c) and (d) (as well as other similar information regarding the sequence $\tilde{W}$) can be found in Theorem 26.4 and its proof in \cite{RB}.

\subsection{}\label{backbp}

We now introduce some background information, taken from \cite{BP2009}.  For each positive integer $n$, let 
$Y^{(n)}:=(Y^{(n)}_k,k\in\mathbf{Z}),\ \tau(n)$, and $X^{(n)}:=(X^{(n)}_k,k\in\mathbf{Z})$ be as in Steps 3.1, 3.5, and 3.6 of \cite{BP2009}.  That is:

The sequence $Y^{(n)}:=(Y^{(n)}_k, k\in\mathbf{Z})$ satisfies the following properties:

\begin{itemize}
\item[(i)] The $\mathbf{R}^{L^n}$-valued random vectors, $Y^{(n)}[jL^n,(j+1)L^n-1],\: j\in\mathbf{Z}$ are independent, and each of them has the same distribution (on $\mathbf{R}^{L^n}$).  For each $n$, let $\nu_n$ be this common distribution.
\item[(ii)] $Y^{(n)}$ satisfies $(L-1)$-tuplewise independence.
\item[(iii)] If $h$ and $j$ are integers such that $h\equiv j$ mod $L^n$, then the random sequences $Y^{(n)}[h,\infty)$ and $Y^{(n)}[j,\infty)$ have the same distribution.
\item[(iv)] Suppose $m$ is a positive integer such that $m>n$.  Then the $\mathbf{R}^{L^n}$-valued random vectors, $Y^{(m)}[jL^n,(j+1)L^n-1],\: j\in\mathbf{Z}$, (a), satisfy $(L-1)$-tuplewise independence (that is, every $L-1$ of these random vectors are independent, so that since $L$ is even and $L\geq6$, one has that  in particular, every \emph{five} of them are independent), and, (b), all have the same distribution $\nu_n$ (from (i)).
\item[(v)] Suppose $m$ is a positive integer such that $m>n$. If $h$ and $j$ are integers such that $h\equiv j$ mod $L^n$, then the random vectors, $Y^{(m)}[h,h+L^n-1]$ and $Y^{(m)}[j,j+L^n-1]$ have the same distribution.
\item[(vi)] Suppose $m$ is a positive integer such that $m>n$.  Then for any two integers $j$ and $\ell$ such that $|j-\ell|\geq2L^n$, the random vectors $Y^{(m)}[j,j+L^n-1]$ and $Y^{(m)}[\ell,\ell+L^n-1]$ are independent.
\end{itemize}

Let $\tau(n)$ be a random variable which takes its values in the set $\{0,1,\ldots,L^n-1\}$ and is uniformly distributed on that set (that is, $P(\tau(n)=j)=L^{-n}$ for each $j$ in that set), such that $\tau(n)$ is independent of the sequence $Y^{(n)}$.

Then let the random sequence $X^{(n)}:=(X^{(n)}_k, k\in\mathbf{Z})$ be defined as follows: For each $k\in\mathbf{Z}$,
\begin{equation*}
X^{(n)}_k:=Y^{(n)}_{k+\tau(n)}.
\end{equation*}

We repeat the following remark from \cite{BP2009} (Remark 3.7 there) here below:

\begin{rk}\label{rmk3.7}  Suppose $n\geq0$.  For any $j\in\mathbf{Z}$ and any set  $C\in {\cal R}^{\mathbf{N}}$, by the above information given on the sequences $Y^{(n)}, X^{(n)}$ and the random variable $\tau(n)$,
\begin{align*}
P(X^{(n)}[j,\infty)\in C)&=\sum_{i=0}^{L^n-1}P(X^{(n)}[j,\infty)\in C|\tau(n)=i)\cdot P(\tau(n)=i)\\
&=\sum_{i=0}^{L^n-1}P(Y^{(n)}[i+j,\infty)\in C|\tau(n)=i)\cdot 1/L^n\\
&=(1/L^n)\sum_{i=0}^{L^n-1}P(Y^{(n)}[i+j,\infty)\in C)\\
&=(1/L^n)\sum_{i=0}^{L^n-1}P(Y^{(n)}[i,\infty)\in C).
\end{align*}
\end{rk}

We now have the following Lemma below (Lemma 3.9 in \cite{BP2009}):

\begin{lem}\label{lemma3.9}  Suppose $n\geq0$.  Then, the following statements hold:
\begin{itemize}
\item[(i)] The random sequence $X^{(n)}$ is strictly stationary.
\item[(ii)] $X^{(n)}_0$ is uniformly distributed on the interval $[-\sqrt{3}, \sqrt{3}]$.
\item[(iii)] The sequence $X^{(n)}$ satisfies $(L-1)$-tuplewise independence.
\item[(iv)] The random variables $|X^{(n)}_k|,\: k\in\mathbf{Z}$ are independent.
\end{itemize}
\end{lem}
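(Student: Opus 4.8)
The plan is to deduce all four statements from the averaging identity in Remark~\ref{rmk3.7} (i.e.\ from conditioning on the random shift $\tau(n)$ and averaging over a full period), combined with three features of the sequence $Y^{(n)}$ coming from the construction in \cite{BP2009}: that $Y^{(n)}$ is $(L-1)$-tuplewise independent (property (ii) of the list in Subsection~\ref{backbp}), that each coordinate $Y^{(n)}_k$ is uniformly distributed on $[-\sqrt3,\sqrt3]$ (so that \emph{all} one-dimensional marginals of $Y^{(n)}$ coincide), and that the magnitudes $|Y^{(n)}_k|,\ k\in\mathbf{Z}$, are i.i.d.

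First, for (i), I would note that the right-hand side of the identity in Remark~\ref{rmk3.7} does not depend on $j$, so the distribution of $X^{(n)}[j,\infty)$ is the same for every $j\in\mathbf{Z}$. Letting $C$ range over cylinder sets, this says $X^{(n)}[j,\infty)\stackrel{d}{=}X^{(n)}[j+1,\infty)$ for all $j$; reading off the first $2N+1$ coordinates gives $(X^{(n)}_{-N},\ldots,X^{(n)}_N)\stackrel{d}{=}(X^{(n)}_{-N+1},\ldots,X^{(n)}_{N+1})$ for every $N$, which is exactly strict stationarity. For (ii), the same conditioning specialized to the single coordinate $k=0$ yields $P(X^{(n)}_0\le x)=(1/L^n)\sum_{i=0}^{L^n-1}P(Y^{(n)}_i\le x)$, a mixture of the one-dimensional marginals of $Y^{(n)}$; since each such marginal is the uniform law on $[-\sqrt3,\sqrt3]$, so is the mixture.

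Parts (iii) and (iv) I would handle by the same averaging, and here lies the only real subtlety. Fixing distinct integers $k(1),\ldots,k(r)$ and conditioning on $\tau(n)=i$, the relevant coordinates of $X^{(n)}$ become the distinct coordinates $Y^{(n)}_{k(1)+i},\ldots,Y^{(n)}_{k(r)+i}$ of $Y^{(n)}$, which are independent (for $r\le L-1$ by $(L-1)$-tuplewise independence of $Y^{(n)}$ in the case of (iii), and for arbitrary $r$ by the independence of the magnitudes in the case of (iv)). The difficulty is that conditional independence given $\tau(n)$ does \emph{not} by itself yield unconditional independence, since averaging over $i$ can destroy a factorization. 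The resolution is that the conditional joint law is the \emph{same} for every value of $i$: because all one-dimensional marginals of $Y^{(n)}$ coincide, the product $\prod_{m=1}^{r}P(Y^{(n)}_{k(m)+i}\le x_m)$ is independent of $i$; and for the magnitudes, the joint law of $\bigl(|Y^{(n)}_{k(1)+i}|,\ldots,|Y^{(n)}_{k(r)+i}|\bigr)$ is the fixed product measure $G^{\otimes r}$ regardless of $i$, since the $|Y^{(n)}_k|$ are i.i.d.\ with some common law $G$.

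Consequently the average over $i$ equals that common product, which in turn coincides with the product of the individual marginals of the $X^{(n)}_{k(m)}$ (resp.\ of the $|X^{(n)}_{k(m)}|$) computed in (ii); this is precisely the asserted independence, establishing both (iii) and (iv). I expect this shift-invariance of the marginal (resp.\ magnitude) laws of $Y^{(n)}$ to be the crux of the argument, as it is exactly what upgrades the merely conditional independence into genuine unconditional independence once one averages over the uniform shift $\tau(n)$.
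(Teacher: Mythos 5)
Your proposal is correct, and it fills in an argument that the paper itself does not spell out: for this lemma the paper simply cites \cite{BP2009}, adding only the one-line observation that strict stationarity follows from the averaging identity in Remark~\ref{rmk3.7} --- which is exactly your argument for (i). Your treatment of (iii) and (iv) is the right one, and you correctly isolate the crux: conditional independence given $\tau(n)=i$ does not by itself survive the mixture over $i$, but it does here because the conditional joint law is the \emph{same} product measure for every $i$ (all one-dimensional marginals of $Y^{(n)}$ being the uniform law on $[-\sqrt3,\sqrt3]$, and the $|Y^{(n)}_k|$ being i.i.d.). The only caveat is that these last two facts are not among the properties (i)--(vi) of $Y^{(n)}$ listed in Subsection~\ref{backbp}; they are genuine features of the construction in \cite{BP2009}, so your proof is sound, but it draws on more of that construction than the paper formally records --- which is unavoidable, since parts (ii) and (iv) of the lemma cannot hold without them.
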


A proof to the above Lemma can of course be found in \cite{BP2009}.  Notice that by Remark \ref{rmk3.7} that one has that the sequence $X^{(n)}$ is strictly stationary.

We finish this section with one last remark taken again from \cite{BP2009} that will play a role in showing Property (D) in Theorem \ref{mainresult}.

\begin{rk}\label{step4.8}
Let $Z$ be a $N(0,1)$ random variable.  Then for any integer $j$ and any positive integer $h$, one has that
\begin{equation*}
E\left(Y^{(n)}_{j+1}+Y^{(n)}_{j+2}+\ldots+Y^{(n)}_{j+h}\right)^L\leq E[h^{1/2}Z]^L
\end{equation*}
for all $n\geq0.$

Furthermore, if $n$ is a nonnegative integer, $h$ is an integer such that $h\geq2L^{n+1}$, and $j$ is any integer, then for all $m\geq n+1$,
\begin{equation*}
E\left(Y^{(m)}_{j+1}+Y^{(m)}_{j+2}+\ldots+Y^{(m)}_{j+h}\right)^L\leq E[h^{1/2}Z]^L-L!\cdot2^{-L}\cdot(L^n)^{L/2}.
\end{equation*}

Thus, by a calculation similar to the one in Remark \ref{rmk3.7} above, for any positive integer $h$, one has that
\begin{equation*}
E[S(X^{(n)},h)]^L\leq E[h^{1/2}Z]^L
\end{equation*}
for all $n\geq0$.
Using the above equations and another calculation similar to the one in Remark \ref{rmk3.7}, one has that for any $n\geq0$, any $h\geq2L^{n+1}$, and any $m\geq n+1$,
\begin{equation}\label{Lthmoment}
E[S(X^{(m)},h)]^L\leq E[h^{1/2}Z]^L - L!\cdot2^{-L}\cdot(L^n)^{L/2}.
\end{equation}
\end{rk}

\section{A Construction}

We now construct a random sequence, $\tilde{X}$ that will satisfy Theorem \ref{mainresult}.  We begin by defining some random sequences.

For each nonnegative integer $n$, let $V^{(n)}:=\left(V^{(n)}_k, k\in\mathbf{Z}\right)$ be an i.i.d.\ sequence of random variables with $P(V^{(n)}_k = 1)=P(V^{(n)}_k = 0)=1/2$ that is independent of $\left(Y^{(n)}, \tau(n), X^{(n)}\right)$. 

Then, for each nonnegative integer $n$ (see Construction \ref{randspace}), define the sequence $\tilde{X}^{(n)}:=(\tilde{X}^{(n)}_k, k\in\mathbf{Z})$ to be the sequence ${\cal S}(X^{(n)},V^{(n)})$.

\begin{lem}\label{constr} Suppose $n\geq0$.  Then, the following statements hold:
\begin{itemize}
\item[(i)] The random sequence $\tilde{X}^{(n)}$ is strictly stationary.
\item[(ii)] For each $x\in\mathbf{R}$ one has that $P(\tilde{X}^{(n)}_0\leq x)=\frac{1}{2}[I_{[0, \infty)}(x)+P(X^{(n)}_0\leq x)]$ and $E\tilde{X}^{(n)}_0=0$, $E\left(\tilde{X}^{(n)}_0\right)^2=1/2$, and $E\left(\tilde{X}^{(n)}_0\right)^4=9/10$.
\item[(iii)] For any positive integer $h$, one has that
\[
E[S(\tilde{X}^{(n)},h)]^L = \sum_{j=1}^h\binom{h}{j}\left(\frac{1}{2}\right)^{h}E[S(X^{(n)},j)]^L
\]
\item[(iv)] The sequence $\tilde{X}^{(n)}$ satisfies $(L-1)$-tuplewise independence.
\item[(v)] The random variables $|\tilde{X}^{(n)}_k|,\: k\in\mathbf{Z}$ are independent.
\end{itemize}
\end{lem}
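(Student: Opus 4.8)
The plan is to recognize that $\tilde{X}^{(n)}$ is exactly the sequence ${\cal S}(X^{(n)},V^{(n)})$ produced by Construction \ref{randspace} with $W=X^{(n)}$, $V=V^{(n)}$, and $p=1/2$, and then to read off parts (i)--(iv) from Lemma \ref{26.3}, reserving a separate argument for (v). By Lemma \ref{lemma3.9}, $X^{(n)}$ is strictly stationary and $(L-1)$-tuplewise independent, and since $X^{(n)}_0$ is uniform on $[-\sqrt3,\sqrt3]$ it is bounded, so all of its moments are finite. Hence Lemma \ref{26.3} applies with $k=L-1$ and $r=L$. Part (i) is then Lemma \ref{26.3}(a), and part (iv) is Lemma \ref{26.3}(b).

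For part (ii), the distribution formula is Lemma \ref{26.3}(c) with $p=1/2$. For the moments I would use the stronger observation that for every integer $r\geq1$ the point mass that the construction places at $0$ contributes $0^r=0$, so that $E(\tilde{X}^{(n)}_0)^r=p\,E(X^{(n)}_0)^r=\tfrac12 E(X^{(n)}_0)^r$; the cases $r=1,2$ are already covered by Lemma \ref{26.3}(c). A direct computation of the moments of the uniform law on $[-\sqrt3,\sqrt3]$ gives $E X^{(n)}_0=0$, $E(X^{(n)}_0)^2=1$, and $E(X^{(n)}_0)^4=9/5$, whence $E\tilde{X}^{(n)}_0=0$, $E(\tilde{X}^{(n)}_0)^2=1/2$, and $E(\tilde{X}^{(n)}_0)^4=9/10$. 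Part (iii) is Lemma \ref{26.3}(d) with $r=L$ and $p=1/2$, using $p^j(1-p)^{h-j}=(1/2)^h$ to pull the common factor outside the sum.

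The one genuinely new piece of work is part (v), since Lemma \ref{26.3} says nothing about absolute values, and I expect this to be the main obstacle. The approach is to condition on the Bernoulli sequence $V^{(n)}$, which is i.i.d.\ and independent of $X^{(n)}$. By Lemma \ref{lemma3.9}(iv) together with strict stationarity, the variables $|X^{(n)}_k|$, $k\in\mathbf{Z}$, are i.i.d.; call their common law $F$. Fix distinct integers $k_1,\ldots,k_m$ and Borel sets $B_1,\ldots,B_m\subseteq[0,\infty)$. Given $V^{(n)}=v$, each position $k_i$ with $v_{k_i}=0$ forces $|\tilde{X}^{(n)}_{k_i}|=0$ deterministically, while the positions with $v_{k_i}=1$ equal $\kappa_j$ for \emph{distinct} indices $j$ (because $j\mapsto\kappa_j$ is a strictly increasing bijection), so the corresponding $|\tilde{X}^{(n)}_{k_i}|=|X^{(n)}_j|$ are, conditionally on $V^{(n)}=v$, independent with law $F$ (conditioning on $v$ does not alter the law of $X^{(n)}$, by independence). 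Hence the conditional joint probability factors as $\prod_{i:\,v_{k_i}=1}F(B_i)\cdot\prod_{i:\,v_{k_i}=0}I_{B_i}(0)$ and depends on $v$ only through $(v_{k_1},\ldots,v_{k_m})$.

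Finally I would integrate out $V^{(n)}$. Because $k_1,\ldots,k_m$ are distinct, $(V^{(n)}_{k_1},\ldots,V^{(n)}_{k_m})$ are i.i.d.\ Bernoulli$(1/2)$, and since each factor above involves only the data attached to a single index $i$, the sum over the $2^m$ values of $(v_{k_1},\ldots,v_{k_m})$ factors into $\prod_{i=1}^m\big(\tfrac12 F(B_i)+\tfrac12 I_{B_i}(0)\big)$. The same computation with a single index shows $P(|\tilde{X}^{(n)}_{k_i}|\in B_i)=\tfrac12 F(B_i)+\tfrac12 I_{B_i}(0)$, so the joint probability equals the product of the marginals, giving (v). The crux of this last part is the observation that the random relabeling $j\mapsto\kappa_j$ sends distinct positions to distinct source indices and, because $X^{(n)}$ is independent of $V^{(n)}$, leaves the selected values i.i.d.\ no matter which positions are selected; this is precisely what makes the conditional probabilities factor cleanly.
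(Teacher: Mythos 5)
Your proposal is correct and follows essentially the same route as the paper, which simply states that all five parts ``follow directly from Lemma \ref{26.3}, Lemma \ref{lemma3.9} and elementary arguments''; you have identified the right lemmas and specializations ($p=1/2$, $k=L-1$, $r=L$) for (i)--(iv). The only difference is that you explicitly supply the ``elementary arguments'' the paper leaves implicit --- the mixture computation $E(\tilde{X}^{(n)}_0)^4=\tfrac12\cdot\tfrac{9}{5}$ for (ii) and the conditioning-on-$V^{(n)}$ factorization for (v) --- and both are carried out correctly.
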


\begin{proof} Notice that statements (i), (ii), (iii), (iv) and (v) above follow directly from Lemma \ref{26.3}, Lemma \ref{lemma3.9} and elementary arguments.
\end{proof}

The random sequence $\tilde{X}$ will now be constructed, (in the same way as the sequence $X$ was constructed in \cite{BP2009}), with a standard argument.

Recall the notations in Section \ref{notations}.

For every nonempty, finite set $G \subset \mathbf{Z}$, the family of (finite dimensional) distributions of the random vectors, $\tilde{X}_G, n\geq0$ is tight by Lemma \ref{constr} above and Lemma \ref{lemma3.9}, and hence every subsequence of those distributions has a further subsequence that converges weakly to some probability measure (on $\mathbf{R}^{(\text{card }G)}$).  Since there exist only countably many finite subsets of $\mathbf{Z}$, employing a standard Cantor diagonal procedure, one obtains a (henceforth fixed) infinite set $\mathbf{Q}\subset\mathbf{N}$ and a family of probability measures $\tilde{\mu}_G$, for nonempty finite sets $G\subset\mathbf{Z}$ (with $\tilde{\mu}_G$ being a probability measure on $\mathbf{R}^{(\text{card } G)}$ for each such $G$), such that for \emph{every} nonempy finite set $G\subset\mathbf{Z}$, $\tilde{X}_G^{(n)}\Rightarrow\tilde{\mu}_G$ as $n\rightarrow\infty, n\in\mathbf{Q}$.  By an elementary argument, that family of probability measures $\tilde{\mu}_G$ satisfies the Kolmogorov consistency condition.  Applying the Kolmogorov Existence (Consistency) Theorem, let $\tilde{X}:=(\tilde{X}_k, k\in\mathbf{Z})$ be a sequence of random variables such that for every nonempty finite set $G\subset\mathbf{Z}$,
\begin{equation}\label{kolconsistent}
\tilde{X}_G^{(n)}\Rightarrow\tilde{X}_G\qquad \text{as } n\rightarrow\infty, n\in\mathbf{Q}.
\end{equation}

\section{Properties of $\tilde{X}$}

We now verify that the sequence $\tilde{X}$ satisfies the properties that are found in Theorem \ref{mainresult}.

Suppose $h$ and $j$ are any integers and $m$ is a nonnegative integer and consider the sets $G(1):=\{h, h+1, h+2,\ldots, h+m\}$ and $G(2):=\{j, j+1, j+2,\ldots, j+m\}$.  By strict stationarity of $\tilde{X}$ (see Lemma \ref{constr}), one has that the random vectors $\tilde{X}^{(n)}_{G(1)}$ and $\tilde{X}^{(n)}_{G(2)}$ have the same distribution.  Thus, by \eqref{kolconsistent} one must have that the random vectors $\tilde{X}_{G(1)}$ and $\tilde{X}_{G(2)}$ must have the same distribution, and hence, the sequence $\tilde{X}$ is strictly stationary.  Moreover, if $G:=\{0\}$, then again by Lemma \ref{constr} and by \eqref{kolconsistent}, one has that $E\tilde{X}_0=0$, and $E\tilde{X}_0^2=1/2$.

Next, by Lemma \ref{constr}(i)(ii)(iv), and by \eqref{kolconsistent} (where one considers finite subsets $G\subset \mathbf{Z}$ with card $G=L-1$), one has that the sequence $\tilde{X}$ must satisfy $L-1$-tuplewise independence.  Thus, $\tilde{X}$ satisfies property (A) in Theorem \ref{mainresult} (where $M=L-1$).  The proof of property (C) is similar to that of property (A), involving Lemma \ref{constr}(i)(ii)(v), \eqref{kolconsistent} and finite subsets $G\subset \mathbf{Z}$ of arbitrarily high cardinality.

The proofs of properties (B) and (D) will require more steps.

\section{Proof that $\tilde{X}$ is mixing (in the ergodic-theoretic sense)}

We will now show that the sequence $\tilde{X}$ satisfies property (B) in Theorem \ref{mainresult}; that is, we will now show that $\tilde{X}$ is mixing (in the ergodic-theoretic sense).

By Proposition \ref{mixing}, it suffices to show that for every choice of integers, $I_1 \leq I_2$ and $J_1 \leq J_2$, and every choice of sets $A_i \in \cal {R}$, $i = I_1, I_1+1, I_1+2,..., I_2$ and $B_j \in {\cal R}$, $j = J_1, J_1+1, J_1+2,..., J_2$ one has that
\begin{multline*}
\lim_{N \to \infty} P(\{\tilde{X}_i\in A_i, I_1\leq i\leq I_2\}\cap\{\tilde{X}_{N+j}\in B_j, J_1\leq j\leq J_2\})=\\
= P(\tilde{X}_i\in A_i, I_1\leq i\leq I_2)\cdot P(\tilde{X}_j\in B_j, J_1\leq j\leq J_2)
\end{multline*}

Notice that for each choice of integers, $I_1 < I_2$ and $J_1 < J_2$, there exists a nonnegative integer, $n$ such that $\text{max}\{I_2-I_1,J_2-J_1\}\leq L^n$.  Thus, by strict stationarity and a trivial argument (involving sets of probability 1), to show that the above equation holds (and hence show that $\tilde{X}$ is mixing (in the ergodic-theoretic sense)), it suffices to show that for any nonnegative integer $n$, and any choice of sets, $A_i \in {\cal R}, i\in \{0,1,..., L^n-1\}$, $B_j \in {\cal R}, j\in \{0,1,..., L^n-1\}$
one has that
\begin{multline}\label{goal}
\lim_{N \to \infty} P(\{\tilde{X}_i\in A_i, -L^n+1\leq i\leq 0\}\cap\{\tilde{X}_{N+j}\in B_j, 0\leq j\leq L^n-1\})=\\
=P(\tilde{X}_i\in A_i, -L^n+1\leq i\leq 0)\cdot P(\tilde{X}_j\in B_j, 0\leq j\leq L^n-1).
\end{multline}

Our goal, therefore, is to show that the above equation, \eqref{goal}, holds.  This will unfold over several steps.

\subsection{More Notation}\label{morenotation}

Throughout the rest of this section, much notation and conventions will be used.  We will assume the following:

Suppose $n$ is an arbitrary but fixed nonnegative integer, and suppose $N$ is a positive integer such that $N>2L^n$.

Let $A_i, i\in \{0,1,..., L^n-1\}, B_j, j\in \{0,1,..., L^n-1\}$ be sets in ${\cal R}$ and define the sets $A:=A_0\times A_1\times \cdots \times A_{L^n-1}$ and $B:=B_0\times B_1\times \cdots \times B_{L^n-1}$.

Let $v:=(v_0, v_1, ..., v_{L^n-1})$ and $w:=(w_0, w_1, ..., w_{L^n-1})$ be elements of $\{0, 1\}^{L^n}$.  

Suppose that there are $Q$ indices $i\in\{0,1,...L^n-1\}$ such that $v_i=1$.  Let $q(Q-1)<q(Q-2)<...<q(0)$ be these $Q$ indices and define the sets $S_v:=\{q(0),q(1), ..., q(Q-1)\}$ and $A_v:=A_{q(Q-1)}\times A_{q(Q-2)}\times \cdots \times A_{q(0)}$.  Similarly, suppose that there are $T$ indices $j\in\{0,1,...L^n-1\}$ such that $w_j=1$.  Let $t(1)<t(2)<...<t(T)$ be these $T$ indices and define the sets $S_w:=\{t(1),t(2), ..., t(T)\}$ and $B_w:=B_{t(1)}\times B_{t(2)}\times \cdots \times B_{t(T)}$.

\subsection{Strategy}\label{strategy}

Suppose $n$ is an arbitrary but fixed nonnegative integer.

There are two key tasks that we would like to complete in this section.  The first task is to show that for any positive integer $m$ satisfying $m>n\geq0$, the following three equations hold:
\begin{multline}\label{task1a}
\Large{P\Big(\substack{(V^{(m)}[-L^n+1,0]=v,\ \tilde{X}^{(m)}[-L^n+1,0]\in A)\text{ and } \\ (V^{(m)}[N,N+L^n-1]=w,\ \tilde{X}^{(m)}[N,N+L^n-1]\in B)}\Big)}\\=\Large{P\Big(\substack{(V^{(n)}[-L^n+1,0]=v,\ \tilde{X}^{(n)}[-L^n+1,0]\in A)\text{ and} \\ (V^{(n)}[N,N+L^n-1]=w,\ \tilde{X}^{(n)}[N,N+L^n-1]\in B)}\Big)},
\end{multline}
\begin{multline}\label{task1b}
P(V^{(m)}[-L^n+1,0]=v,\ \tilde{X}^{(m)}[-L^n+1,0]\in A)=\\=P(V^{(n)}[-L^n+1,0]=v,\ \tilde{X}^{(n)}[-L^n+1,0]\in A),
\end{multline}
\begin{multline}\label{task1c}
P(V^{(m)}[0,L^n-1]=w,\ \tilde{X}^{(m)}[0,L^n-1]\in B)=\\=P(V^{(n)}[0,L^n-1]=w,\ \tilde{X}^{(n)}[0,L^n-1]\in B)
\end{multline}

The second task is to show that as $N\rightarrow\infty$,
\begin{multline}\label{task2}
P((V^{(n)}[-L^n+1,0]=v,\ \tilde{X}^{(n)}[-L^n+1,0]\in A)\text{ and }(V^{(n)}[N,N+L^n-1]=w,\ \tilde{X}^{(n)}[N,N+L^n-1]\in B))\\ \longrightarrow P(V^{(n)}[-L^n+1,0]=v,\ \tilde{X}^{(n)}[-L^n+1,0]\in A)\cdot P(V^{(n)}[0,L^n-1]=w,\ \tilde{X}^{(n)}[0,L^n-1]\in B)
\end{multline}

Let us now recall our goal.  In order to show that the sequence $\tilde{X}$ is mixing (in the ergodic-theoretic sense), we must show that(see equation \eqref{goal} and the notations in Section \ref{morenotation} above),
\begin{equation}\label{goal*}
\lim_{N \to \infty} P(\tilde{X}[-L^n+1,0]\in A, \tilde{X}[N,N+L^n-1]\in B) = P(\tilde{X}[-L^n+1,0]\in A)\cdot P(\tilde{X}[0,L^n-1]\in B),
\end{equation}
where, once again, $n$ is some arbitrary but \emph{fixed} nonnegative integer.

Notice that if \eqref{task1a} is true, then, for all $m\geq n+1$, one has that
\begin{equation*}
P(\tilde{X}^{(m)}[-L^n+1,0]\in A, \tilde{X}^{(m)}[N,N+L^n-1]\in B)= P(\tilde{X}^{(n)}[-L^n+1,0]\in A,\tilde{X}^{(n)}[N,N+L^n-1]\in B)
\end{equation*}

Furthermore, if \eqref{task1b} and \eqref{task1c} are true then, for all $m\geq n+1$, one has that
\begin{equation*}
P(\tilde{X}^{(m)}[-L^n+1,0]\in A)\cdot P(\tilde{X}^{(m)}[0,L^n-1]\in B)=P(\tilde{X}^{(n)}[-L^n+1,0]\in A)\cdot P(\tilde{X}^{(n)}[0,L^n-1]\in B).
\end{equation*}

Thus, by an elementary argument and by \eqref{kolconsistent}, one would then have that:
\begin{align*}
P(\tilde{X}[-L^n+1,0]\in A, \tilde{X}[N,N+L^n-1]\in B) &= P(\tilde{X}^{(n)}[-L^n+1,0]\in A,\tilde{X}^{(n)}[N,N+L^n-1]\in B),\text{ and}\\
P(\tilde{X}[-L^n+1,0]\in A)\cdot P(\tilde{X}[0,L^n-1]\in B) &= P(\tilde{X}^{(n)}[-L^n+1,0]\in A)\cdot P(\tilde{X}^{(n)}[0,L^n-1]\in B).
\end{align*}

Hence, if \eqref{task2} is also true, then one has that \eqref{goal*} holds, and hence the sequence $\tilde{X}$ is mixing (in the ergodic-theoretic sense).

\subsection{Task 1}\label{Task 1}

The goal of this section will be to show that \eqref{task1a}, \eqref{task1b}, and \eqref{task1c} hold.

Suppose $m$ is a positive integer such that $0\leq n<m$, 
%

Suppose for now that for all $i\in\{0,1,...L^n-1\}-S_v$ and all $j\in\{0,1,...L^n-1\}-S_w$, one has that $0\in A_i$ and $0\in B_j$.  If this is not the case then note that \eqref{task1a}, \eqref{task1b}, \eqref{task1c} and \eqref{task2} hold trivially.  Under these assumptions, one has that
\begin{align*}
RHS[\eqref{task1a}]&=\sum_{k=1}^N\Large{P\left(\substack{(V^{(n)}[-L^n+1,0]=v,\ \tilde{X}^{(n)}[-L^n+1,0]\in A),\ V^{(n)}_1+\ldots+V^{(n)}_{N-1}=k-1\\ \text{ and }(V^{(n)}[N,N+L^n-1]=w,\ \tilde{X}^{(n)}[N,N+L^n-1]\in B)}\right)}\\
&=\sum_{k=1}^N\Large{P\left(\substack{(V^{(n)}[-L^n+1,0]=v,\ X^{(n)}[-Q+1,0]\in A_v),\ V^{(n)}_1+\ldots+V^{(n)}_{N-1}=k-1\\ \text{ and }(V^{(n)}[N,N+L^n-1]=w,\ X^{(n)}[k,k+T-1]\in B_w)}\right)}\\
&=\sum_{k=1}^N\left(\frac{1}{2}\right)^{2L^n}\binom{N-1}{k-1}\left(\frac{1}{2}\right)^{N-1}\cdot P(X^{(n)}[-Q+1,0]\in A_v, X^{(n)}[k,k+T-1]\in B_w)\\
&=\left(\frac{1}{2}\right)^{2L^n}\sum_{k=1}^N\binom{N-1}{k-1}\left(\frac{1}{2}\right)^{N-1}P(X^{(n)}[-Q+1,0]\in A_v, X^{(n)}[k,k+T-1]\in B_w).
\end{align*}

Notice that by a calculation similar to the one above, one also has that
\begin{equation*}
LHS[\eqref{task1a}]=\left(\frac{1}{2}\right)^{2L^n}\sum_{k=1}^N\binom{N-1}{k-1}\left(\frac{1}{2}\right)^{N-1}P(X^{(m)}[-Q+1,0]\in A_v, X^{(m)}[0,T-1]\in B_w),
\end{equation*}
so that to show \eqref{task1a}, it suffices to show that for each $k\in\{1,2,\ldots,N\}$,
\begin{multline}\label{minitask}
P(X^{(m)}[-Q+1,0]\in A_v, X^{(m)}[k,k+T-1]\in B_w)=\\=P(X^{(n)}[-Q+1,0]\in A_v, X^{(n)}[k,k+T-1]\in B_w)
\end{multline}
holds.

Let $A'_0:=\mathbf{R}, A'_1:=\mathbf{R},\ldots, A'_{L^n-Q-1}:=\mathbf{R},A'_{L^n-Q}:=A_{q(Q-1)},A'_{L^n-Q+1}:=A_{q(Q-2)},\ldots,A'_{L^n-1}:=A_{q(0)}$ and $B'_0:=B_{t(1)}, B'_1:=B_{t(2)},\ldots,B'_{T-1}:=B_{t(T)},B'_T:=\mathbf{R},B'_T:=\mathbf{R},B'_{T+1}:=\mathbf{R},\ldots,B'_{L^n-1}:=\mathbf{R}$.  Letting $A':=A'_0\times A'_1\times\cdots\times A'_{L^n-1}$ and $B':=B'_0\times B'_1\times\cdots\times B'_{L^n-1}$ notice that to show \eqref{minitask} and hence show \eqref{task1a}, it is enough to show that for each $k\in\{1,2,\ldots, N\}$
\begin{multline}\label{minitask1}
P(X^{(m)}[-L^n+1,0]\in A', X^{(m)}[k,k+L^n-1]\in B')=\\=P(X^{(n)}[-L^n+1,0]\in A', X^{(n)}[k,k+L^n-1]\in B')
\end{multline}
holds.

Suppose $k\in\{1,2,\ldots, N\}$.  Then,
\begin{align*}
RHS[\eqref{minitask1}]&=\frac{1}{L^n}\sum_{i=0}^{L^n-1}P(X^{(n)}[-L^n+1,0]\in A', X^{(n)}[k,k+L^n-1]\in B'|\tau(n)=i)\\
&=\frac{1}{L^n}\sum_{i=0}^{L^n-1}P(Y^{(n)}[i-L^n+1,i]\in A', Y^{(n)}[i+k,i+k+L^n-1]\in B').
\end{align*}

Notice by a calculation similar to the one above, we also have that
\begin{align*}
LHS[\eqref{minitask1}]&=\frac{1}{L^m}\sum_{i=0}^{L^m-1}P(X^{(m)}[-L^n+1,0]\in A', X^{(m)}[k,k+L^n-1]\in B'|\tau(m)=i)\\
&=\frac{1}{L^m}\sum_{i=0}^{L^m-1}P(Y^{(m)}[i-L^n+1,i]\in A', Y^{(m)}[i+k,i+k+L^n-1]\in B').
\end{align*}

Now, recall the background information given in Section \ref{backbp}.  Suppose $j$ is such that $i\equiv j$ mod $L^n$.  By Section \ref{backbp}(iv) and a simple argument, 
\begin{multline*}
P(Y^{(m)}[i-L^n+1,i]\in A', Y^{(m)}[i+k,i+k+L^n-1]\in B')=\\=P(Y^{(m)}[j-L^n+1,j]\in A', Y^{(m)}[j+k,j+k+L^n-1]\in B').
\end{multline*}
Hence,
\begin{equation*}
LHS[\eqref{minitask1}]=\frac{1}{L^n}\sum_{i=0}^{L^n-1}P(Y^{(m)}[i-L^n+1,i]\in A', Y^{(m)}[i+k,i+k+L^n-1]\in B').
\end{equation*}

Thus, in order to show \eqref{task1a}, it suffices to show that for each $k\in\{1,2,\ldots,N\}$ and each $i\in\{0,1,\ldots,L^n-1\}$, one has that
\begin{multline}\label{minitask2}
P(Y^{(m)}[i-L^n+1,i]\in A', Y^{(m)}[i+k,i+k+L^n-1]\in B')=\\=P(Y^{(n)}[i-L^n+1,i]\in A', Y^{(n)}[i+k,i+k+L^n-1]\in B').
\end{multline}

We have a couple of cases to consider, but before looking into them, it will be useful to introduce some new notation that will be used throughout the rest of the section.

For each $i \in \{0, 1,\ldots, L^n-2\}$, define the following sets: 
\begin{equation*}
A(i):=\overbrace{\mathbf{R}\times\mathbf{R}\times\cdots\times\mathbf{R}}^\text{$i+1$ times}\times A'_0\times A'_1\times\cdots A'_{L^n-2-i},
\end{equation*}
and 
\begin{equation*}
A'(i):=A'_{L^n-i-1}\times A'_{L^n-i}\times\cdots\times A'_{L^n-1}\times \underbrace{\mathbf{R}\times\mathbf{R}\times\cdots\times\mathbf{R}}_\text{$L^n-i-1$ times},
\end{equation*}
and let $A(L^n-1):=\mathbf{R}^{L^n}$ and $A'(L^n-1):=A'$.

Also, let $B(0):=B'$, $B'(0):=\mathbf{R}^{L^n}$, and for each $j \in \{1,2,\ldots, L^n-1\}$ define the following sets:
\begin{equation*}
B(j):=\overbrace{\mathbf{R}\times\mathbf{R}\times\cdots\times\mathbf{R}}^\text{j times}\times B'_0\times B'_1\times\cdots\times B'_{L^n-1-j},\: \text{and,}\\
\end{equation*}
\begin{equation*}
B'(j)\&:=B'_{L^n-j}\times B'_{L^n-j+1}\times\cdots\times B'_{L^n-1}\times\overbrace{\mathbf{R}\times\mathbf{R}\times\cdots\times\mathbf{R}}^\text{$L^n-j$ times}.
\end{equation*}

As a first case, let us suppose that $i, k$ are such that $1\leq i+k\leq L^n-1$.  Then by background information in section \ref{backbp},

\begin{align*}
RHS[\eqref{minitask2}]&=P(Y^{(n)}[i-L^n+1,i]\in A',\ Y^{(n)}[i+k,i+k+L^n-1]\in B')\\
&=\Large{P\left(\substack{Y^{(n)}[-L^n,-1]\in \mathbf{R}\times\mathbf{R}\times\cdots\times\mathbf{R}\times A'_0\times A'_1\times\cdots\times A'_{L^n-i-2},\\Y^{(n)}[0,L^n-1]\in A'_{L^n-i-1}\times A'_{L^n-i}\times\cdots\times A'_{L^n-1}\times\mathbf{R}\times\mathbf{R}\times\cdots\times\mathbf{R}\times B'_0\times B'_1\times\cdots\times B'_{L^n-i-k-1},\\ \text{and }Y^{(n)}[L^n,2L^n-1]\in B'_{L^n-i-k}\times B'_{L^n-i-k+1}\times\cdots\times B'_{L^n-1}\times\mathbf{R}\times\mathbf{R}\times\cdots\times\mathbf{R}} \right)}\\
&=\nu_n(A(i))\cdot \nu_n( A'_{L^n-i-1}\times A'_{L^n-i}\times\cdots\times A'_{L^n-1}\times\mathbf{R}\times\mathbf{R}\times\cdots\times\mathbf{R}\times B'_0\times B'_1\times\cdots\times B'_{L^n-i-k-1})\\ 
& \qquad \cdot  \nu_n( B'(i+k)).
\end{align*}

By a calculation similar to the one above, one also has that by background information on the sequence $Y^{(m)}$ found in Section \ref{backbp},
\begin{align*}
LHS[\eqref{minitask2}]=&=P(Y^{(m)}[i-L^n+1,i]\in A',\ Y^{(m)}[i+k,i+k+L^n-1]\in B')\\
&=\Large{P\left(\substack{Y^{(m)}[-L^n,-1]\in \mathbf{R}\times\mathbf{R}\times\cdots\mathbf{R}\times A'_0\times A'_1\times\cdots \times A'_{L^n-i-2},\\Y^{(m)}[0,L^n-1]\in A'_{L^n-i-1}\times A'_{L^n-i}\times\cdots\times A'_{L^n-1}\times\mathbf{R}\times\mathbf{R}\times\cdots\times\mathbf{R}\times B'_0\times B'_1\times\cdots\times B'_{L^n-i-k-1},\\ \text{and }Y^{(m)}[L^n,2L^n-1]\in B'_{L^n-i-k}\times B'_{L^n-i-k+1}\times\cdots\times B'_{L^n-1}\times\mathbf{R}\times\mathbf{R}\times\cdots\times\mathbf{R}} \right)}\\
&=\nu_n(A(i))\cdot \nu_n( A'_{L^n-i-1}\times A'_{L^n-i}\times\cdots\times A'_{L^n-1}\times\mathbf{R}\times\mathbf{R}\times\cdots\times\mathbf{R}\times B'_0\times B'_1\times\cdots\times B'_{L^n-i-k-1})\\
& \qquad \cdot \nu_n(B'(i+k)).
\end{align*}

That is, in the case that $i, k$ are such that $1\leq i+k\leq L^n-1$, one has that \eqref{minitask2} holds.

Now, as a second case, suppose that $i, k$ are such that $i+k\geq L^n$.  Then let $K$ be the positive integer such that $KL^n\leq i+k\leq (K+1)L^n-1$.  Let $K'$ be the integer such that $i+k=KL^n+K'$.  Then,
\begin{align*}
RHS[\eqref{minitask2}]&=\Large{P\left(\substack{Y^{(n)}[-L^n,-1]\in \mathbf{R}\times\mathbf{R}\times\cdots\mathbf{R}\times A'_0\times A'_1\times\cdots\times A'_{L^n-i-2},\\Y^{(n)}[0,L^n-1]\in A'_{L^n-i-1}\times A'_{L^n-i}\times\cdots\times A'_{L^n-1}\times\mathbf{R}\times\mathbf{R}\times\cdots\times\mathbf{R},\\Y^{(n)}[KL^n,(K+1)L^n-1]\in \mathbf{R}\times\mathbf{R}\times\cdots\times\mathbf{R}\times B'_0\times B'_1\times\cdots\times B'_{L^n-K'-1},\text{ and}\\ Y^{(n)}[(K+1)L^n,(K+2)L^n-1]\in B'_{L^n-K'}\times B'_{L^n-K'+1}\times\cdots\times B'_{L^n-1}\times\mathbf{R}\times\mathbf{R}\times\cdots\times\mathbf{R}}\right)}\\
&=\nu_n(A(i))\cdot \nu_n(A'(i)) \cdot \nu_n(B(K')) \cdot \nu_n(B'(K')).
\end{align*}

Again, by a calculation similar to the one above (see Section \ref{backbp}), one has that \eqref{minitask2} holds, and hence \eqref{task1a} holds.

To complete our task, we now need to show that \eqref{task1b} and \eqref{task1c} hold.  That is, we will show that
\begin{multline*}
P(V^{(m)}[-L^n+1,0]=v,\ \tilde{X}^{(m)}[-L^n+1,0]\in A)=\\=P(V^{(n)}[-L^n+1,0]=v,\ \tilde{X}^{(n)}[-L^n+1,0]\in A),
\end{multline*}
and
\begin{multline*}
P(V^{(m)}[0,L^n-1]=w,\ \tilde{X}^{(m)}[0,L^n-1]\in B)=\\=P(V^{(n)}[0,L^n-1]=w,\ \tilde{X}^{(n)}[0,L^n-1]\in B).
\end{multline*}

We begin by noticing that (see Section \ref{rmk3.7})
\begin{align}\label{goodcalc1}
P(V^{(n)}[-L^n+1,0]=v,\ \tilde{X}^{(n)}[-L^n+1,0]\in A)&=\left(\frac{1}{2}\right)^{L^n}P(X^{(n)}[-Q+1,0]\in A_v) \notag \\
&=\left(\frac{1}{2}\right)^{L^n}P(X^{(n)}[-L^n+1,0]\in A')\notag \\
&=\left(\frac{1}{2}\right)^{L^n}\frac{1}{L^n}\sum_{i=0}^{L^n-1}P(Y^{(n)}[i-L^n+1,i]\in A') \notag \\
&=\left(\frac{1}{2}\right)^{L^n}\frac{1}{L^n}\sum_{i=0}^{L^n-1}\nu_n(A(i))\cdot\nu_n(A'(i))
\end{align}
and that by background information in Section \ref{backbp} one also has that
\begin{align}\label{goodcalc2}
P(V^{(m)}[-L^n+1,0]=v, \tilde{X}^{(m)}[-L^n+1,0]\in A)&=\left(\frac{1}{2}\right)^{L^n}P(X^{(m)}[-Q+1,0]\in A_v) \notag \\
&=\left(\frac{1}{2}\right)^{L^n}P(X^{(m)}[-L^n+1,0]\in A')\notag \\
&=\left(\frac{1}{2}\right)^{L^n}\frac{1}{L^m}\sum_{i=0}^{L^m-1}P(Y^{(m)}[i-L^n+1,i]\in A') \notag \\
&=\left(\frac{1}{2}\right)^{L^n}\frac{1}{L^n}\sum_{i=0}^{L^n-1}P(Y^{(m)}[i-L^n+1,i]\in A') \notag \\
&=\left(\frac{1}{2}\right)^{L^n}\frac{1}{L^n}\sum_{i=0}^{L^n-1}\nu_n(A(i))\cdot\nu_n(A'(i))
\end{align}

and hence \eqref{task1b} holds.

By similar arguments and calculations as the ones made above in \eqref{goodcalc1} and \eqref{goodcalc2}, one has that
\begin{align}\label{goodcalc3}
P(V^{(n)}[0,L^n-1]=w,\ \tilde{X}^{(n)}[0,L^n-1]\in B)&=\left(\frac{1}{2}\right)^{L^n}P(X^{(n)}[0,T-1]\in B_w)\notag\\
&=\left(\frac{1}{2}\right)^{L^n}P(X^{(n)}[0,L^n-1]\in B')\notag\\
&=\left(\frac{1}{2}\right)^{L^n}\frac{1}{L^n}\sum_{j=0}^{L^n-1}\nu_n(B(j))\cdot\nu_n(B'(j)),
\end{align}
and
\begin{align*}
P(V^{(m)}[0,L^n-1]=w,\ \tilde{X}^{(m)}[0,L^n-1]\in B)&=\left(\frac{1}{2}\right)^{L^n}P(X^{(m)}[0,T-1]\in B_w)\\
&=\left(\frac{1}{2}\right)^{L^n}P(X^{(m)}[0,L^n-1]\in B')\\
&=\left(\frac{1}{2}\right)^{L^n}\frac{1}{L^n}\sum_{j=0}^{L^n-1}\nu_n(B(j))\cdot\nu_n(B'(j)),
\end{align*}
and hence \eqref{task1c} holds.

We now proceed to show that \eqref{task2} holds.

\subsection{Task 2}\label{Task2}

The goal of this section will be to show that \eqref{task2} holds, and hence (see Section \ref{strategy}) prove that the sequence $\tilde{X}$ is mixing (in the ergodic-theoretic sense).  That is, we must show that as $N\rightarrow\infty$
\begin{align*}
P((V^{(n)}[&-L^n+1,0]=v,\ \tilde{X}^{(n)}[-L^n+1,0]\in A)\text{ and} (V^{(n)}[N,N+L^n-1]=w,\ \tilde{X}^{(n)}[N,N+L^n-1]\in B))\notag\\ &\longrightarrow P(V^{(n)}[-L^n+1,0]=v,\ \tilde{X}^{(n)}[-L^n+1,0]\in A)\cdot P(V^{(n)}[0,L^n-1]=w,\ \tilde{X}^{(n)}[0,L^n-1]\in B)
\end{align*}

From calculations done in Task 1 above, one has that
\begin{align*}
P(&(V^{(n)}[-L^n+1,0]=v,\ \tilde{X}^{(n)}[-L^n+1,0]\in A)\text{ and} (V^{(n)}[N,N+L^n-1]=w,\ \tilde{X}^{(n)}[N,N+L^n-1]\in B))=\\
&=\left(\frac{1}{2}\right)^{2L^n}\sum_{k=1}^N \binom{N-1}{k-1}\left(\frac{1}{2}\right)^{N-1}P(X^{(n)}[-L^n+1,0]\in A'\text{ and }X^{(n)}[k,k+L^n-1]\in B')\\
&=\left(\frac{1}{2}\right)^{2L^n}\left(\frac{1}{L^n}\right)\sum_{k=1}^N \sum_{i=0}^{L^n-1}\binom{N-1}{k-1}\left(\frac{1}{2}\right)^{N-1}P(Y^{(n)}[i-L^n+1,i]\in A'\text{ and }Y^{(n)}[i+k,i+k+L^n-1]\in B'),
\end{align*}
and by \eqref{goodcalc1} and \eqref{goodcalc3}, one also has that
\begin{align*}
P(&V^{(n)}[-L^n+1,0]=v,\ \tilde{X}^{(n)}[-L^n+1,0]\in A)\cdot P(V^{(n)}[0,L^n-1]=w,\ \tilde{X}^{(n)}[0,L^n-1]\in B)=\\
&=\left(\frac{1}{2}\right)^{2L^n}\left(\frac{1}{L^n}\right)^2\cdot\sum_{i=0}^{L^n-1}\sum_{j=0}^{L^n-1}P(Y^{(n)}[i-L^n+1,i]\in A')\cdot P(Y^{(n)}[j,j+L^n-1]\in B')\\
&=\left(\frac{1}{2}\right)^{2L^n}\left(\frac{1}{L^n}\right)^2\cdot\sum_{i=0}^{L^n-1}\sum_{j=0}^{L^n-1}\nu_n(A(i))\cdot \nu_n(A'(i))\cdot \nu_n(B(j))\cdot \nu_n(B'(j)).
\end{align*}

Thus, in order to show \eqref{task2} and complete the proof that $\tilde{X}$ is mixing (in the ergodic-theoretic sense), it suffices to show that as $N\rightarrow\infty$,

\begin{align}\label{1star}
\sum_{k=1}^N & \sum_{i=0}^{L^n-1}\binom{N-1}{k-1}\left(\frac{1}{2}\right)^{N-1}P(Y^{(n)}[i-L^n+1,i]\in A'\text{ and }Y^{(n)}[i+k,i+k+L^n-1]\in B')\notag \\ \longrightarrow & \frac{1}{L^n}\sum_{i=0}^{L^n-1}\sum_{j=0}^{L^n-1}\nu_n( A(i))\cdot \nu_n( A'(i))\cdot \nu_n(B(i))\cdot \nu_n(B'(i))
\end{align}

Now, suppose $\beta_{N-1}$ is a binomial random variable with parameters $N-1$ and 1/2.  Then (see Section \ref{backbp}),

\begin{align}\label{2star}
&\sum_{k=1}^N \sum_{i=0}^{L^n-1}\binom{N-1}{k-1}\left(\frac{1}{2}\right)^{N-1}P(Y^{(n)}[i-L^n+1,i]\in A'\text{ and }Y^{(n)}[i+k,i+k+L^n-1]\in B') \notag\\
&=\sum_{k=0}^{N-1} \sum_{i=0}^{L^n-1}P(\beta_{N-1}=k)\cdot P(Y^{(n)}[i-L^n+1,i]\in A'\text{ and }Y^{(n)}[i+k+1,i+k+L^n]\in B')\notag \\
&=\sum_{k=0}^{L^n-1} \sum_{i=0}^{L^n-1}P(\beta_{N-1}=k)\cdot P(Y^{(n)}[i-L^n+1,i]\in A'\text{ and }Y^{(n)}[i+k+1,i+k+L^n]\in B')\notag\\
& \: +P(\beta_{N-1} \in \{L^n,2L^n,3L^n,\ldots\})\sum_{i=0}^{L^n-1}P(Y^{(n)}[i-L^n+1,i]\in A')\cdot P(Y^{(n)}[i+1,i+L^n]\in B')\notag \\
& \: +P(\beta_{N-1} \in \{L^n+1,2L^n+1,\ldots\})\sum_{i=0}^{L^n-1}P(Y^{(n)}[i-L^n+1,i]\in A')\cdot P(Y^{(n)}[i+2,i+L^n+1]\in B') \notag \\
& \: +\:\ldots \notag\\
& \: +P(\beta_{N-1} \in \{2L^n-2,3L^n-2,\ldots\})\sum_{i=0}^{L^n-1}P(Y^{(n)}[i-L^n+1,i]\in A')\cdot P(Y^{(n)}[i+L^n-1,i+2L^n-2]\in B')\notag \\
& \: +P(\beta_{N-1} \in \{2L^n-1,3L^n-1,\ldots\})\sum_{i=0}^{L^n-1}P(Y^{(n)}[i-L^n+1,i]\in A')\cdot P(Y^{(n)}[i,i+L^n-1]\in B').
\end{align}

We now make a couple observations.

First, notice that

\begin{align*}
&\sum_{k=0}^{L^n-1} \sum_{i=0}^{L^n-1}P(\beta_{N-1}=k)\cdot P(Y^{(n)}[i-L^n+1,i]\in A'\text{ and }Y^{(n)}[i+k,i+k+L^n-1]\in B')\\
&\leq \sum_{k=0}^{L^n-1} \sum_{i=0}^{L^n-1}P(\beta_{N-1}=k)\\
&=L^n\left(\frac{1}{2}\right)^{N-1}\sum_{k=0}^{L^n-1}\binom{N-1}{k}\\
&\longrightarrow 0 \qquad \text{as }N\rightarrow\infty.
\end{align*}

That is,

\begin{align}\label{claim1}
& \sum_{k=0}^{L^n-1} \sum_{i=0}^{L^n-1}P(\beta_{N-1}=k)\cdot P(Y^{(n)}[i-L^n+1,i]\in A'\text{ and }Y^{(n)}[i+k,i+k+L^n-1]\in B') \notag \\
&\longrightarrow 0 \qquad \text{as }N\rightarrow\infty.
\end{align}

Second, we know that since $\beta_{N-1}$ is a Binomial random variable with parameters $N-1$ and $1/2$, one has that as $N\rightarrow\infty$ ,
\begin{equation}\label{mod1}
P(\beta_{N-1}=k \text{ mod } L^n) \longrightarrow \frac{1}{L^n}
\end{equation}
for $k\in\{0,1,\ldots, L^n-1\}.$

Furthermore, for each $k\in\{0,1,\ldots, L^n-1\}$,
\begin{align*}
P(\beta_{N-1}=k \text{ mod } L^n)&= P(\beta_{N-1}\in \{k, k+L^n, k+2L^n,\ldots \}) \\
&=P(\{\beta_{N-1}=k\}\cup \{\beta_{N-1}\in \{k+L^n, k+2L^n, k+3L^n,\ldots \}\}) \\
&=P(\beta_{N-1}=k)+P(\beta_{N-1}\in \{k+L^n, k+2L^n, k+3L^n,\ldots \})\\
&=\binom{N-1}{k}\left(\frac{1}{2}\right)^{N-1}+P(\beta_{N-1}\in \{k+L^n, k+2L^n, k+3L^n,\ldots \})\\
&\leq N^k\left(\frac{1}{2}\right)^{N-1}+P(\beta_{N-1}\in \{k+L^n, k+2L^n, k+3L^n,\ldots \}).
\end{align*}

Thus, by \eqref{mod1} and since $N^k(1/2)^{N-1}\rightarrow0$, one has that for each $k\in\{0,1,\ldots, L^n-1\},$

\begin{equation}\label{claim2}
P(\beta_{N-1}\in\{k+L^n, k+2L^n, k+3L^n,\ldots,\}) \longrightarrow \frac{1}{L^n}
\end{equation}
as $N\rightarrow\infty.$

By \eqref{claim1} and \eqref{claim2}, as $N\rightarrow\infty$, one has that
\begin{align}\label{3star}
&\sum_{k=0}^{N-1} \sum_{i=0}^{L^n-1}P(\beta_{N-1}=k)\cdot P(Y^{(n)}[i-L^n+1,i]\in A'\text{ and }Y^{(n)}[i+k+1,i+k+L^n]\in B') \notag \\
&\longrightarrow\frac{1}{L^n}\sum_{i=0}^{L^n-1}\sum_{j=0}^{L^n-1}P(Y^{(n)}[i-L^n+1,i]\in A')\cdot P(Y^{(n)}[i+j,i+j+L^n-1]\in B').
\end{align}

Notice that for each $i\in\{0,1,\ldots,L^n-1\}$,(see Section \ref{backbp}) one has that
\begin{align*}
&\sum_{j=0}^{L^n-1}P(Y^{(n)}[i+j,i+j+L^n-1]\in B')\\
&=P(Y^{(n)}[i,i+L^n-1]\in B')+P(Y^{(n)}[i+1,i+1+L^n-1]\in B')+\ldots\\
&+P(Y^{(n)}[i+L^n-1,i+2L^n-2]\in B')\\
&=\sum_{j=0}^{L^n-1}\nu_n(B(j))\cdot\nu_n(B'(j)),
\end{align*}
and since
\begin{equation*}
\sum_{i=0}^{L^n-1}P(Y^{(n)}[i-L^n+1,i]\in A') = \sum_{i=0}^{L^n-1}\nu_n(A(i))\cdot\nu_n(A'(i)),
\end{equation*}
one has that
\begin{align*}
&\sum_{i=0}^{L^n-1}\sum_{j=0}^{L^n-1}P(Y^{(n)}[i-L^n+1,i]\in A')\cdot P(Y^{(n)}[i+j,i+j+L^n-1]\in B')\\
&=\sum_{i=0}^{L^n-1}\left[P(Y^{(n)}[i-L^n+1,i]\in A')\sum_{j=0}^{L^n-1}P(Y^{(n)}[i+j,i+j+L^n-1]\in B')\right]\\
&=\sum_{i=0}^{L^n-1}\left[P(Y^{(n)}[i-L^n+1,i]\in A')\sum_{j=0}^{L^n-1}\nu_n(B(j))\cdot\nu_n(B'(j))\right]\\
&=\sum_{i=0}^{L^n-1}\nu_n(A(i))\cdot\nu_n(A'(i))\sum_{j=0}^{L^n-1}\nu_n(B(j))\cdot\nu_n(B'(j))\\
&=\sum_{i=0}^{L^n-1}\sum_{j=0}^{L^n-1}\nu_n(A(i))\cdot\nu_n(A'(i))\cdot\nu_n(B(j))\cdot\nu_n(B'(j)).
\end{align*}

Thus, \eqref{3star} reads that as $N\rightarrow\infty$,

\begin{multline*}
\sum_{k=0}^{N-1} \sum_{i=0}^{L^n-1}P(\beta_{N-1}=k)\cdot P(Y^{(n)}[i-L^n+1,i]\in A'\text{ and }Y^{(n)}[i+k+1,i+k+L^n]\in B')\\
\longrightarrow\frac{1}{L^n}\left(\sum_{i=0}^{L^n-1}\sum_{j=0}^{L^n-1}\nu_n(A(i))\cdot\nu_n(A'(i))\cdot\nu_n(B(j))\cdot\nu_n(B'(j))\right).
\end{multline*}

Hence, via \eqref{2star}, one has that \eqref{1star} holds, and thus we have completed Task 2, thereby completing the proof that the sequence $\tilde{X}$ is mixing (in the ergodic-theoretic sense).

\section{Proof that the CLT fails to hold for $\tilde{X}$}

Recall the background information and notation found in Construction \ref{randspace}.  Throughout this section, suppose $Z$ is a $N(0,1)$ random variable and suppose that $\xi:=(\xi_k, k\in\mathbf{Z})$ is a sequence of independent, identically distributed (i.i.d.) $N(0,1)$ random variables.

We now present a few lemmas.

\begin{lem}\label{lemmaone}Suppose $n$ is any nonnegative integer.  Then, for any $h\geq4L^{n+1}$, one has that
\begin{equation}
E[S(\tilde{X},h)/\sqrt{h}]^L\leq h^{-L/2}\sum_{j=1}^h\binom{h}{j}\left(\frac{1}{2}\right)^h E(S(\xi,j))^L-h^{-L/2}\cdot L!\cdot2^{-L}\cdot(L^n)^{L/2}\sum_{j=2L^{n+1}}^h\binom{h}{j}\left(\frac{1}{2}\right)^h.
\end{equation}
\end{lem}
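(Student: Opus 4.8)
The plan is to obtain the bound on $E[S(\tilde{X},h)]^L$ as a limit of the corresponding quantities for the approximating sequences $\tilde{X}^{(m)}$, for which Lemma~\ref{constr}(iii) and the moment estimate \eqref{Lthmoment} give an explicit handle. First I would fix the finite set $G:=\{1,2,\ldots,h\}$ and recall from \eqref{kolconsistent} that $\tilde{X}^{(m)}_G\Rightarrow\tilde{X}_G$ as $m\to\infty$, $m\in\mathbf{Q}$. Since the map $(x_1,\ldots,x_h)\mapsto(x_1+\cdots+x_h)^L$ is continuous and, because $L$ is even, nonnegative, the portmanteau theorem (equivalently, Fatou's lemma applied via a Skorokhod representation) yields the lower-semicontinuity inequality
\[
E[S(\tilde{X},h)]^L\le\liminf_{m\to\infty,\,m\in\mathbf{Q}}E[S(\tilde{X}^{(m)},h)]^L.
\]
This is the one genuinely nontrivial step, and it is also the crux of why the argument works only as an inequality: weak convergence does not preserve $L$th moments in general, but since we only need an \emph{upper} bound on $E[S(\tilde{X},h)]^L$, the Fatou direction is exactly what is available and exactly what is needed.

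Next I would evaluate the right-hand side. By Lemma~\ref{constr}(iii), for every $m$ one has $E[S(\tilde{X}^{(m)},h)]^L=\sum_{j=1}^h\binom{h}{j}(1/2)^h\,E[S(X^{(m)},j)]^L$, so it suffices to bound each $E[S(X^{(m)},j)]^L$ using Remark~\ref{step4.8}. I would split the sum at $j=2L^{n+1}$. For the terms $1\le j<2L^{n+1}$ I would use the crude bound $E[S(X^{(m)},j)]^L\le E[j^{1/2}Z]^L$, valid for all $m\ge0$. For the terms $2L^{n+1}\le j\le h$, and provided $m\ge n+1$, I would instead use the sharper estimate \eqref{Lthmoment}, namely $E[S(X^{(m)},j)]^L\le E[j^{1/2}Z]^L-L!\cdot2^{-L}\cdot(L^n)^{L/2}$; here the hypothesis $h\ge4L^{n+1}$ (indeed $h\ge 2L^{n+1}$ already suffices) guarantees that this range is nonempty. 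The constant $-L!\cdot2^{-L}\cdot(L^n)^{L/2}$ is then picked up once for each $j$ in $\{2L^{n+1},\ldots,h\}$.

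I would then identify $E[j^{1/2}Z]^L$ with $E[S(\xi,j)]^L$, which is immediate because $S(\xi,j)=\xi_1+\cdots+\xi_j$ is $N(0,j)$-distributed and hence has the same law as $j^{1/2}Z$. Summing the two ranges gives, for every $m\ge n+1$,
\[
E[S(\tilde{X}^{(m)},h)]^L\le\sum_{j=1}^h\binom{h}{j}\Big(\tfrac12\Big)^h E[S(\xi,j)]^L-L!\cdot2^{-L}\cdot(L^n)^{L/2}\sum_{j=2L^{n+1}}^h\binom{h}{j}\Big(\tfrac12\Big)^h.
\]
Since the right-hand side no longer depends on $m$, taking $\liminf$ over $m\to\infty$, $m\in\mathbf{Q}$, and combining with the lower-semicontinuity inequality from the first paragraph produces the same bound for $E[S(\tilde{X},h)]^L$. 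Finally, dividing through by $h^{L/2}$ — i.e.\ writing $E[S(\tilde{X},h)/\sqrt{h}]^L=h^{-L/2}E[S(\tilde{X},h)]^L$ — yields the stated inequality. The only point requiring care beyond routine bookkeeping is the weak-convergence-to-moment step in the first paragraph; everything after it is a direct substitution of the two moment estimates recorded in Remark~\ref{step4.8}.
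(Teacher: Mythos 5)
Your proposal is correct and follows essentially the same route as the paper: expand $E[S(\tilde{X}^{(m)},h)]^L$ via Lemma \ref{constr}(iii), apply the two moment bounds of Remark \ref{step4.8} on the ranges $j<2L^{n+1}$ and $j\geq 2L^{n+1}$ (with $m\geq n+1$), and pass to the limit along $\mathbf{Q}$ using \eqref{kolconsistent}. The paper leaves the final limiting step implicit, whereas you correctly observe that the Fatou/portmanteau direction for the nonnegative continuous function $(x_1,\ldots,x_h)\mapsto(x_1+\cdots+x_h)^L$ is all that is needed for the upper bound.
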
\label{Lemma1}
\begin{proof}
By equation \eqref{Lthmoment}, and the fact that $\xi$ is a sequence i.i.d.\ $N(0,1)$ random variables, for any integer $n\geq0$, any $h\geq4L^{n+1}$, and any $m\geq n+1$, one has that
\begin{equation*}
E[S(\tilde{X}^{(m)},h)]^L\leq\sum_{j=1}^h\binom{h}{j}\left(\frac{1}{2}\right)^hE[S(\xi,j)]^L - L!\cdot2^{-L}\cdot(L^n)^{L/2}\sum_{j=2L^{n+1}}^h\binom{h}{j}\left(\frac{1}{2}\right)^h.
\end{equation*}
The desired result now follows by the above inequality and \eqref{kolconsistent}.
\end{proof}

\begin{lem}\label{lemmatwo}
For any $h\geq4L$, one has that $$E[S(\tilde{X},h)/\sqrt{h}]^L\leq h^{-L/2}\sum_{j=1}^h\binom{h}{j}\left(\frac{1}{2}\right)^h E(S(\xi,j))^L - 2^{-(5L+2)/2}\cdot L!\cdot L^{-L}.$$
\end{lem}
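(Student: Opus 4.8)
The plan is to deduce Lemma~\ref{lemmatwo} from Lemma~\ref{lemmaone} by making, for each fixed $h$, the ``right'' choice of the free index $n$, namely the largest one for which Lemma~\ref{lemmaone} still applies, and then bounding the resulting subtracted term below by a constant that does not depend on $h$. First I would observe that for $h\geq 4L=4L^{0+1}$ the set $\{n\geq 0:4L^{n+1}\leq h\}$ is nonempty (it contains $n=0$) and finite (since $4L^{n+1}\to\infty$), so it has a largest element $n$; by maximality this $n$ satisfies
\[
4L^{n+1}\leq h<4L^{n+2}.
\]
Since $h\geq 4L^{n+1}$, Lemma~\ref{lemmaone} is applicable with this $n$, and its first right-hand term is exactly the first term appearing in the target inequality. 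Thus everything reduces to bounding the subtracted quantity
\[
h^{-L/2}\cdot L!\cdot 2^{-L}\cdot (L^n)^{L/2}\sum_{j=2L^{n+1}}^{h}\binom{h}{j}\left(\frac{1}{2}\right)^{h}
\]
below by $2^{-(5L+2)/2}\cdot L!\cdot L^{-L}$.

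I would handle this through two independent estimates. For the power-of-$h$ factor, the upper bound $h<4L^{n+2}$ gives
\[
h^{-L/2}(L^n)^{L/2}>(4L^{n+2})^{-L/2}(L^n)^{L/2}=4^{-L/2}L^{-L}=2^{-L}L^{-L},
\]
which is precisely where the balancing choice of $n$ pays off: the growing factor $(L^n)^{L/2}$ cancels the decaying $h^{-L/2}$, leaving a constant multiple of $L^{-L}$. For the binomial sum, I would recognize it as $P(\beta_h\geq 2L^{n+1})$, where $\beta_h$ is a Binomial$(h,1/2)$ random variable; since $4L^{n+1}\leq h$ is equivalent to $2L^{n+1}\leq h/2$, the event $\{\beta_h\geq 2L^{n+1}\}$ contains $\{\beta_h\geq h/2\}$, and by the symmetry of $\beta_h$ about its median $h/2$ one has $P(\beta_h\geq h/2)\geq 1/2$. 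Hence the sum is at least $1/2$, uniformly in $h$.

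Combining the two estimates, the subtracted term exceeds
\[
L!\cdot 2^{-L}\cdot 2^{-L}L^{-L}\cdot\tfrac{1}{2}=L!\,L^{-L}\,2^{-2L-1},
\]
and because $2L+1\leq(5L+2)/2$ for every $L\geq 0$, this is at least $2^{-(5L+2)/2}L!\,L^{-L}$, which yields the stated inequality. I expect the genuinely conceptual step to be the selection of $n$ (the largest admissible one), since this is what converts the $h$-dependent bound of Lemma~\ref{lemmaone} into a uniform constant; the remaining work is routine. The only point requiring a little care is the uniform binomial tail bound $P(\beta_h\geq 2L^{n+1})\geq 1/2$, which must be justified from $2L^{n+1}\leq h/2$ and the symmetry of the symmetric binomial rather than asserted, but this is elementary.
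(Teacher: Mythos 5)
Your proof is correct and follows essentially the same route as the paper's: choose the unique $n$ with $4L^{n+1}\leq h<4L^{n+2}$, apply Lemma \ref{lemmaone}, use $h<4L^{n+2}$ to turn $h^{-L/2}(L^n)^{L/2}$ into a constant multiple of $L^{-L}$, and bound the binomial tail below by $1/2$ via the symmetry of $\beta_h$ about $h/2$. The only (immaterial) difference is that you track the constant slightly more sharply, obtaining $2^{-2L-1}$ and then observing it dominates the stated $2^{-(5L+2)/2}$, whereas the paper works with $(2h)^{-L/2}$ and lands on that constant directly.
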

\begin{proof}
Suppose $h\geq4L$.  Let $n$ be the nonnegative integer such that $4L^{n+1}\leq h<4L^{n+2}$.

Since $h<4L^{n+2}$, one has that $(2h)^{-L/2}>(2\cdot 4L^{n+2})^{-L/2}=2^{-3L/2}\cdot (L^n)^{-L/2}\cdot L^{-L}$, so that, $(2h)^{-L/2}\cdot L!\cdot2^{-L}\cdot(L^n)^{L/2}\geq 2^{-5L/2}\cdot L!\cdot L^{-L}$.  Thus, since $h\geq4L^{n+1}$, by Lemma \ref{lemmaone}, it follows that
\begin{equation}\label{1}
E[S(\tilde{X},h)/\sqrt{h}]^L\leq h^{-L/2}\sum_{j=1}^h\binom{h}{j}\left(\frac{1}{2}\right)^h E(S(\xi,j))^L - 2^{-5L/2}\cdot L!\cdot L^{-L}\sum_{j=2L^{n+1}}^h\binom{h}{j}\left(\frac{1}{2}\right)^h.
\end{equation}

Now, if $\beta_h$ is a Binomial random variable with parameters $h$ and $\frac{1}{2}$, one has that $P(\beta_h=k)=P(\beta_h=n-k)$ for any $k\leq \lfloor h/2\rfloor$.  Then (again, since $h\geq4L^{n+1}$)note that
\begin{equation}\label{2}
\sum_{j=2L^{n+1}}^h\binom{h}{j}\left(\frac{1}{2}\right)^h=P(\beta_h\geq 2L^{n+1})\geq P(\beta_h\geq \Big\lfloor \frac{h}{2}\Big\rfloor)\geq \frac{1}{2}
\end{equation}

Thus, by \eqref{1} and \eqref{2}, the desired result follows.
\end{proof}

\begin{lem}\label{lemmathree} There exists a positive integer $H$ such that for all $h>H$, one has that $E[S(\tilde{X},h)/\sqrt{h}]^L\leq E(Z/\sqrt{2})^L-2^{-(5L+4)/2}\cdot L!\cdot L^{-L}.$
\end{lem}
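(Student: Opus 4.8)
The plan is to combine Lemma \ref{lemmatwo} with a single limit computation, exploiting the factor-of-two gap between the subtracted constants in Lemmas \ref{lemmatwo} and \ref{lemmathree}. Write
\[
F(h):=h^{-L/2}\sum_{j=1}^h\binom{h}{j}\left(\frac{1}{2}\right)^h E(S(\xi,j))^L,
\]
so that Lemma \ref{lemmatwo} reads $E[S(\tilde{X},h)/\sqrt{h}]^L\leq F(h)-2^{-(5L+2)/2}\cdot L!\cdot L^{-L}$ for $h\geq4L$. Since $2^{-(5L+2)/2}=2\cdot2^{-(5L+4)/2}$, the target inequality of Lemma \ref{lemmathree} will follow once I show that $F(h)\leq E(Z/\sqrt{2})^L+2^{-(5L+4)/2}\cdot L!\cdot L^{-L}$ for all sufficiently large $h$; the leftover $2^{-(5L+4)/2}\cdot L!\cdot L^{-L}$ is exactly the slack the gap provides.

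So the real content is to prove that $F(h)\to E(Z/\sqrt{2})^L$ as $h\to\infty$. First I would evaluate $F(h)$ explicitly. Because $\xi$ is i.i.d.\ $N(0,1)$, the partial sum $S(\xi,j)$ is $N(0,j)$, hence $S(\xi,j)\overset{d}{=}\sqrt{j}\,Z$ and (as $L$ is even) $E(S(\xi,j))^L=j^{L/2}\,E Z^L$. Substituting and noting that the $j=0$ term vanishes,
\[
F(h)=E Z^L\cdot\sum_{j=0}^h\binom{h}{j}\left(\frac{1}{2}\right)^h\left(\frac{j}{h}\right)^{L/2}=E Z^L\cdot E\!\left[\left(\frac{\beta_h}{h}\right)^{L/2}\right],
\]
where $\beta_h$ is a Binomial random variable with parameters $h$ and $1/2$ (the same device already used in the proof of Lemma \ref{lemmatwo}).

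Next I would let $h\to\infty$. By the law of large numbers $\beta_h/h\to1/2$ in probability, and since $0\leq(\beta_h/h)^{L/2}\leq1$ the bounded convergence theorem gives $E[(\beta_h/h)^{L/2}]\to(1/2)^{L/2}=2^{-L/2}$. Therefore $F(h)\to 2^{-L/2}\,E Z^L=E(Z/\sqrt{2})^L$. I would then choose $H\geq4L$ large enough that $F(h)<E(Z/\sqrt{2})^L+2^{-(5L+4)/2}\cdot L!\cdot L^{-L}$ for all $h>H$, which is possible because $F(h)$ converges to $E(Z/\sqrt{2})^L$ and the added constant is strictly positive. Feeding this back into Lemma \ref{lemmatwo} via $2^{-(5L+2)/2}=2\cdot2^{-(5L+4)/2}$ yields the claimed bound for all $h>H$.

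The argument is essentially routine once $F(h)$ is recognized as $E Z^L$ times the $(L/2)$-th moment of the Binomial sample mean $\beta_h/h$. The only step requiring genuine care is the passage to the limit, which I would justify cleanly by the boundedness of $(\beta_h/h)^{L/2}$ together with the law of large numbers, rather than by any direct asymptotic estimate on the binomial moments $E(\beta_h^{L/2})$.
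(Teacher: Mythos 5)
Your proposal is correct, and its overall skeleton (Lemma \ref{lemmatwo} plus the convergence of $F(h):=h^{-L/2}\sum_{j=1}^h\binom{h}{j}(1/2)^h E(S(\xi,j))^L$ to $E(Z/\sqrt{2})^L$, plus the factor-of-two arithmetic $2^{-(5L+2)/2}=2\cdot2^{-(5L+4)/2}$) matches the paper's. Where you genuinely diverge is in how the limit $F(h)\to E(Z/\sqrt{2})^L$ is established. The paper introduces the auxiliary randomly-spaced Gaussian sequence $\tilde{\xi}:={\cal S}(\xi,V)$, identifies $F(h)$ with $E(S(\tilde{\xi},h)/\sqrt{h})^L$ via Lemma \ref{26.3}(d), obtains $(S(\tilde{\xi},h)/\sqrt{h})^L\Rightarrow(Z/\sqrt{2})^L$ from the Central Limit Theorem and the continuous mapping theorem, and then upgrades weak convergence to convergence of moments by a uniform $(L+2)$-th moment bound (uniform integrability, via [\cite{B}, Theorem 25.7]). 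You instead evaluate $F(h)$ in closed form as $EZ^L\cdot E[(\beta_h/h)^{L/2}]$ using $E(S(\xi,j))^L=j^{L/2}EZ^L$, and pass to the limit by the law of large numbers together with bounded convergence, since $0\leq(\beta_h/h)^{L/2}\leq1$. Your route is more elementary and self-contained: it avoids the auxiliary process, the CLT, and the uniform integrability step entirely, at the cost of relying on the exact Gaussian moment formula (which the paper also uses, but only in its $(L+2)$-th moment estimate). Both arguments are sound; yours is arguably the cleaner one for this particular lemma, while the paper's keeps the construction ${\cal S}(\cdot,\cdot)$ as the unifying device throughout.
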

\begin{proof}Suppose $V:=(V_k, k\in\mathbf{Z})$ be a sequence of i.i.d.\ random variables with $P(V_0=1)=P(V_0=0)=1/2$.  Then let $\tilde{\xi}:=(\tilde{\xi}_k, k\in\mathbf{Z})$ be the sequence ${\cal S}(\xi,V).$

Then, by Lemma \ref{26.3}, the Central Limit Theorem, and [\cite{B}, Theorem 25.7], one has that as $h\rightarrow\infty$,
\begin{equation}\label{xi1}
(S(\tilde{\xi},h)/\sqrt{h})^L \Rightarrow (Z/\sqrt{2})^L.
\end{equation}

Notice that since $\xi$ is a sequence of i.i.d. $N(0,1)$ random variables, for each postive integer $j$, the random variable $S(\xi,j)$ has the same distribution as the random variable $\sqrt{j}Z$ so that by Lemma \ref{26.3}, one has
\begin{align*}
E(S(\tilde{\xi},h)/\sqrt{h})^{L+2}&=h^{-(L+2)/2}\sum_{j=1}^h\binom{h}{j}\left(\frac{1}{2}\right)^hE(S(\xi,j))^{L+2}\\
&=h^{-(L+2)/2}\sum_{j=1}^h\binom{h}{j}\left(\frac{1}{2}\right)^hE(\sqrt{j}Z)^{L+2}\\
&\leq EZ^{L+2}\sum_{j=1}^h\binom{h}{j}\left(\frac{1}{2}\right)^h\\
&\leq EZ^{L+2} \\
&<\infty.
\end{align*}

Thus, $sup_hE(S(\tilde{\xi},h)/\sqrt{h})^{L+2}<\infty$ and hence, by \eqref{xi1}, one must have that as $n\rightarrow\infty$,
\begin{equation}\label{xi2}
E(S(\tilde{\xi},h)/\sqrt{h})^L \longrightarrow E(Z/\sqrt{2})^L.
\end{equation}

Hence, by \eqref{xi2} and Lemma \ref{26.3}(d), there exists a positive integer $h_0$ such that for all $h\geq h_0$,
\begin{equation}\label{xi3}
h^{-L/2}\sum_{j=1}^h\binom{h}{j}\left(\frac{1}{2}\right)^hE(S(\xi,j))^L < E(Z/\sqrt{2})^L + 2^{-(5L+4)/2}\cdot L!\cdot L^{-L}.
\end{equation}

Now, let $H:=$ max$\{4L, h_0\}.$  Then, by Lemma \ref{lemmatwo} and \eqref{xi3}, for all $h\geq H$, one has that
\begin{equation*}
E[S(\tilde{X},h)/\sqrt{h}]^L\leq E(Z/\sqrt{2})^L-2^{-(5L+4)/2}\cdot L!\cdot L^{-L},
\end{equation*}
which is the desired result.
\end{proof}

We are now ready to prove property (D) in Theorem 1.1.

Recall that $E\tilde{X}_0=0$, $E\tilde{X}_0^2=1/2$ and $E\tilde{X}_0^4=9/10$, and also by $(L-1)$-tuplewise independence (all shown in earlier sections), one has that 
\begin{equation}\label{mean}
\forall n\geq 1, \qquad E[S(\tilde{X}, n)/\sqrt{n}] = 0 \quad \textup{and}
\end{equation}
\begin{equation}\label{var}
\forall n\geq 1, \qquad E[S(\tilde{X}, n)/\sqrt{n}]^2 = 1/2.
\end{equation}

Further, by the standard argument in [B, p. 85, proof of Theorem 6.1], one has that for each $n\in \mathbf{N}$,
\begin{equation}
E[S(\tilde{X},n)]^4 = n\cdot E\tilde{X}_0^4+3n(n-1)(E\tilde{X}_0^2)^2 = n\cdot (9/10)+(3n^2-3n)\cdot (1/4)
\end{equation}
Hence
\begin{equation}\label{corr}
\forall n\geq 1, \qquad E\left(S(\tilde{X}, n))/\sqrt{n}\right)^4 = 3/4+(3/20n)<2.
\end{equation}
By \eqref{var} and Chebyshev's inequality, one has that the family of distributions of the normalized partial sums $S(\tilde{X},n)/\sqrt{n}, n\in\mathbf{N}$ is tight.

By tightness, for every infinite subset $Q$ of $\mathbf{N}$, there exists an infinite set $T\subset Q$ and a probability measure, $\tilde{\mu}$ on $\mathbf{R}$
(both $T$ and $\mu$ henceforth fixed) such that
\begin{equation}\label{conv}
S(\tilde{X},n)/\sqrt{n} \Rightarrow \tilde{\mu} \quad \text{as $n\rightarrow\infty, n\in T$}
\end{equation}
To complete the proof of property (D) in Theorem (1.1), our task is now to show that $\tilde{\mu}$ is neither degenerate nor normal.

Because of \eqref{corr} and \eqref{conv} and the Corollary in [B, p. 338], one has by \eqref{mean} and \eqref{var} that
\begin{equation}\label{above}
\int_{x\in\mathbf{R}}x\tilde{\mu}(\mathrm{d}x)=0\quad\text{and}\quad\int_{x\in\mathbf{R}}x^2\tilde{\mu}(\mathrm{d}x)=1/2.
\end{equation}
Hence the probabillity measure $\tilde{\mu}$ has positive variance and is therefore nondegenerate.

If $\tilde{\mu}$ were normal, then by the above equation, \eqref{above}, it would have to be the $N(0,1/2)$ distribution.  Then by \eqref{conv} and [\cite{B}, Theorem 25.7 and Theorem 25.11] that $\text{liminf}_{n\rightarrow\infty}E[S(\tilde{X},n)/\sqrt{n}]^L\geq E(Z/\sqrt{2})^L$ where $Z$ is a $N(0,1)$ random variable.  But this contradicts Lemma \ref{lemmathree}, and thus $\tilde{\mu}$ cannot be normal.  This completes the proof of property (D).

\section{Acknowledgements}

The author thanks Richard C. Bradley for not only suggesting the problem, but also for helpful advice and suggestions made along the way.


\begin{thebibliography}{99}
\bibitem[B]{B}P. Billingsley, \textit{Probability and Measure}, 3rd ed., Wiley, New York, 1995.
\bibitem[RB]{RB} R.C. Bradley, \textit{Introduction to Strong Mixing Conditions.} Volumes 1 and 3.  Kendrick Press, Herber City (Utah), 2007.
\bibitem[BP2009]{BP2009}R.C. Bradley \& A. R. Pruss, A strictly stationary, $M$-tuplewise independent counterexample to the Central Limit Theorem, \textit{Stochastic Processes and Their Application}, \textbf 119, 2009, pp. 3300-3318.
\end{thebibliography}
\end{document}